\theoremstyle{definition}
\newtheorem{define}{Definition}[section]
\newtheorem{theorem}{Theorem}[section]
\newtheorem*{theorem*}{Theorem}
\newtheorem{cor}{Corollary}[section]
\theoremstyle{remark}
\renewcommand*\env@matrix[1][\arraystretch]{%
  \edef\arraystretch{#1}%
  \hskip -\arraycolsep
  \let\@ifnextchar\new@ifnextchar
  \array{*\c@MaxMatrixCols c}}
\title{Technical Report: Improved Fourier Reconstruction using Jump Information with Applications to MRI}
\author[1]{Jade Larriva-Latt\thanks{jlarriva@wellesley.edu}}
\author[2]{Angela Morrison\thanks{arm14@albion.edu}}
\author[3]{Alison Radgowski\thanks{alrad001@mail.goucher.edu}}
\author[4]{Joseph Tobin\thanks{jat9kf.@virginia.edu}}
\author[5]{Aditya Viswanathan\thanks{aditya@math.msu.edu}}
\author[6]{Mark Iwen\thanks{markiwen@math.msu.edu}}
\affil[1]{Department of Mathematics, Wellesley College}
\affil[2]{Department of Mathematics and Computer Science, Albion College}
\affil[3]{Department of Mathematics and Computer Science, Goucher College}
\affil[4]{Department of Mathematics, University of Virginia}
\affil[5]{Department of Mathematics, Michigan State University}
\affil[6]{Dept. of Mathematics, and Dept. of ECE, Michigan State University}
\begin{document}

\maketitle

\begin{abstract}
Certain applications such as Magnetic Resonance Imaging (MRI) require the reconstruction of
functions from Fourier spectral data. When the underlying functions are piecewise-smooth, standard
Fourier approximation methods suffer from the {\em Gibbs phenomenon} --  with associated oscillatory
artifacts in the vicinity of edges and an overall reduced order of convergence in the approximation.
This paper proposes an edge-augmented Fourier reconstruction procedure which uses only the first few
Fourier coefficients of an underlying piecewise-smooth function to accurately estimate jump
information and then incorporate it into a Fourier partial sum approximation. We provide
both theoretical and empirical results showing the improved accuracy of the proposed method,
as well as comparisons demonstrating superior performance over existing state-of-the-art sparse
optimization-based methods.  Extensions of the proposed techniques to functions of several variables are also addressed preliminarily.
All code used to generate the results in this report can be found at \cite{Bitbucket}.
\end{abstract}

\section{Introduction}
This paper addresses the problem of reconstructing a $2\pi$-periodic piecewise-smooth function given
a small number of its lowest frequency Fourier series coefficients.  The Fourier partial sum
reconstruction of such piecewise-smooth functions suffers from the {\em Gibbs phenomenon}
\cite{Hesthaven_2007_Spectral} -- with its associated non-physical oscillations in the vicinity of
jump discontinuities and an overall reduced order of accuracy in the reconstruction. In applications
such as MR imaging -- where the scanning apparatus collects Fourier coefficients
\cite{Nishimura_PrinciplesMRI_2010} of the specimen being imaged -- these oscillatory artifacts and
the reduced order of accuracy are significant impediments to generating accurate and fast scan
images. Hence, there exists significant ongoing and inter-disciplinary interest in novel methods of
reconstructing such functions from Fourier spectral data.

\subsection{Related Work}
\label{sec:lit_survey}
The traditional approach to mitigating Gibbs artifacts is using low-pass filtering
\cite{Hesthaven_2007_Spectral}. However, this does not completely eliminate all artifacts; filtered
reconstructions still suffer from smearing in the vicinity of edges and improved convergence rates
are restricted to regions away from edges. Spectral reprojection methods such as
\cite{Archibald_GegenbauerMRI_2002} work by reconstructing the function in each smooth interval
using an alternate (non-periodic) basis such as those consisting of Gegenbauer polynomials. While
these methods have been shown to be highly accurate, they are sensitive to parameter choice; indeed,
small errors in parameter selection or estimated edge location (which are used to determine the
intervals of smoothness) can lead to large reconstruction errors. More recently, there has been
significant interest in compressed sensing \cite{Candes_Stable_2006,Donoho_Compressed_2006} based
approaches to this problem -- see, for example, \cite{Lustig_Sparse_2007,Lustig_Compressed_2008}.
While these approaches are indeed extremely powerful and versatile, they are implicitly discrete
methods and do not perform well when provided with continuous measurements as is the case here.
Indeed, we show in the empirical results below that they exhibit poor (first-order) numerical
convergence when recovering a piecewise-smooth function $f$ from its {\em continuous} Fourier
measurements. The method proposed in this paper is perhaps most closely
related to those in 
\cite{Batenkov_2015,ongie2015recovery,ongie2015super,vetterli2002sampling,maravic2005sampling,haacke1989superresolution,haacke1989constrained} 
all of which use prony-like methods to estimate the 
 jumps in $f$ (after which the jumps' deleterious spectral effects can be mitigated).
In contrast to these previous approaches, however, herein we $(i)$ propose the use of a simple spectral extrapolation scheme together with an alternate (and highly noise robust) 
non-prony-based jump estimation procedure, $(ii)$ provide ($\ell_2$-norm) error
analysis of the proposed method for general piecewise-smooth functions, $(iii)$ present comparisons with state-of-the-art sparse
optimization based reconstruction methods, {\it and} $(iv)$ present preliminary 2D reconstruction results.

The rest of this paper is organized as follows: In \S \ref{sec:notation}, we set up notation and
provide some definitions before relating jump information to the Fourier coefficients of a
piecewise-smooth function. \S \ref{sec:JumpAugPartSumRec} shows how this jump information can be
incorporated into a modified Fourier partial sum approximation. Some analytic error bounds for this
jump augmented Fourier reconstruction are also provided. Next, \S \ref{Sec:NumMethJumpAugSum}
defines two different methods of accurately estimating jump information given Fourier spectral data.
\S \ref{sec:error_bounds_est} provides theoretical error bounds for the proposed reconstruction
method with estimated edge information while \S \ref{sec:2D} discusses an extension of the method to
two-dimensional problems. We provide some concluding comments and directions for future research in
\S \ref{sec:conclusion}.

\section{Notation, Setup, and Review}
\label{sec:notation}

We will utilize the following characterization of piecewise smooth functions of one variable.

\begin{define}
A $2\pi$-periodic function $f: \mathbbm{R} \rightarrow \mathbbm{R}$ is {\it piecewise continuous} if
\begin{enumerate}
\item $f$ is Riemann integrable on $[-\pi, \pi]$, and
\item $f$ is continuous at all points in $(-\pi,\pi]$ except for at most finitely many points $$-\pi < x_{1} < \dots < x_{J} \leq \pi.$$
\label{def:piecewiseContin}
\end{enumerate}
\end{define}

\begin{define}
A $2\pi$-periodic function $f: \mathbbm{R} \rightarrow \mathbbm{R}$ is {\it piecewise smooth} if
\begin{enumerate}
\item $f$ is piecewise continuous, 
\item $f$ is differentiable on all of $(-\pi, x_1)$, $(x_1, x_2)$, $\dots$, $(x_{J-1}, x_J)$, $(x_J, \pi)$,
\item $f'$ as a $2 \pi$-periodic function with $f'(x_1) := \dots := f'(x_J) := 0$ is also piecewise continuous (having potentially more than $J$ discontinuities in $(-\pi, \pi]$),
\item $f'$ is differentiable almost everywhere on all of the open intervals between its neighboring discontinuities, and
\item $f''$ is Lebesgue integrable on $[-\pi, \pi]$.
\end{enumerate}
\label{def:piecewiseSmooth}
\end{define}

The Fourier series of a $2\pi$-periodic piecewise smooth function $f$ is
\begin{equation*}
 \sum_{k = -\infty}^{\infty} \hat f_{k} e^{ikx}
\end{equation*}
where the $\hat f_{k}$, the $k^{\rm th}$ Fourier coefficient of $f$, is
\begin{equation}
\hat f_{k} = \frac{1}{2 \pi} \int_{-\pi}^{\pi} f(x)e^{-ikx} dx.
\label{Fourier Coefficients}
\end{equation}

Note that computing the full Fourier series is not always feasible in practice.  When this is impossible, a partial Fourier sum  
\begin{equation}
S_{N}f(x) := \sum_{k = -N}^{N} \hat f_{k} e^{ikx}
\label{Standard Reconstruction}
\end{equation}
can be used for a given $N \in \mathbbm{N}$. It is well known that, as $N$ increases, $S_{N}f$ converges to $f$ pointwise almost everywhere. For example, we have the following theorem.

\begin{theorem}(See \cite{Folland_1992})
If $f: \mathbbm{R} \rightarrow \mathbbm{R}$ is piecewise smooth then $$\lim_{N\to\infty} S_Nf(x)=\frac{1}{2}[f(x^-)+f(x^+)]$$ for every $x \in \mathbbm{R}$, where $f(x^-) := \displaystyle{\lim_{\delta \rightarrow 0 {\rm ~for~} \delta >0} f(x - \delta)}$ and $f(x^+) := \displaystyle{\lim_{\delta \rightarrow 0 {\rm ~for~} \delta >0} f(x + \delta)}$.  In particular, $\displaystyle \lim_{N\to\infty}S_Nf(x)=f(x)$ for every $x$ at which $f$ is continuous.  
\label{thm:PointwiseApproxSN}
\end{theorem}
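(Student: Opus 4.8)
The plan is to run the classical Dirichlet‑kernel argument: represent $S_Nf(x)$ as a convolution against the Dirichlet kernel, reduce the claim to a single oscillatory integral tending to zero, and kill that integral with the Riemann--Lebesgue lemma after checking a Dini‑type integrability condition that piecewise smoothness provides. First I would recall the Dirichlet kernel
\begin{equation*}
D_N(t) \;=\; \sum_{k=-N}^{N} e^{ikt} \;=\; \frac{\sin\!\big((N+\tfrac12)t\big)}{\sin(t/2)},
\end{equation*}
which is even, $2\pi$-periodic, and satisfies $\frac{1}{2\pi}\int_{-\pi}^{\pi} D_N(t)\,dt = 1$, hence $\frac{1}{2\pi}\int_{0}^{\pi} D_N(t)\,dt = \tfrac12$. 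Inserting \eqref{Fourier Coefficients} into \eqref{Standard Reconstruction} and using $2\pi$-periodicity to substitute $t=x-y$ gives $S_Nf(x) = \frac{1}{2\pi}\int_{-\pi}^{\pi} f(x-t)\,D_N(t)\,dt$; folding $[-\pi,0]$ onto $[0,\pi]$ via $t\mapsto -t$ and the evenness of $D_N$ turns this into $S_Nf(x) = \frac{1}{2\pi}\int_{0}^{\pi}\big[f(x-t)+f(x+t)\big]D_N(t)\,dt$. Since $\tfrac12[f(x^-)+f(x^+)] = \frac{1}{2\pi}\int_0^{\pi}[f(x^-)+f(x^+)]D_N(t)\,dt$, the theorem is equivalent (after reducing to $x \in (-\pi,\pi]$ by periodicity) to
\begin{equation*}
\lim_{N\to\infty}\frac{1}{2\pi}\int_{0}^{\pi} \Phi_x(t)\,\sin\!\big((N+\tfrac12)t\big)\,dt \;=\; 0, \qquad \Phi_x(t) \;:=\; \frac{[f(x-t)-f(x^-)]+[f(x+t)-f(x^+)]}{\sin(t/2)}.
\end{equation*}

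The crux is to verify that $\Phi_x \in L^1([0,\pi])$ (and, along the way, that the one-sided limits $f(x^\pm)$ exist). On $[\varepsilon,\pi]$ the denominator is bounded away from $0$ and the numerator is bounded, since a piecewise continuous function is Riemann integrable and therefore bounded, so only the behavior near $t=0$ is in question. There I would write $\Phi_x(t) = \frac{t}{\sin(t/2)}\cdot\frac{[f(x-t)-f(x^-)]+[f(x+t)-f(x^+)]}{t}$ and show the one-sided difference quotients stay bounded as $t\to 0^+$: immediately to the right (respectively left) of any point $x$ there is an interval on which $f$ is differentiable, by part~2 of Definition~\ref{def:piecewiseSmooth}, and on which $f'$ is bounded by some finite $M$ because $f'$ (with the jump-point convention of part~3) is piecewise continuous and hence bounded. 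The mean value theorem then makes $f$ Lipschitz on that interval, so $f$ extends continuously to its endpoint — which shows $f(x^\pm)$ exist — and $|f(x\pm t)-f(x^\pm)|\le Mt$ for all small $t>0$. Since $\frac{t}{\sin(t/2)}\to 2$, this makes $\Phi_x$ bounded near $0$, hence integrable on $[0,\pi]$.

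Finally, expanding $\sin\!\big((N+\tfrac12)t\big) = \sin(Nt)\cos(t/2) + \cos(Nt)\sin(t/2)$ splits the target integral into $\int_0^{\pi}\big(\Phi_x(t)\cos(t/2)\big)\sin(Nt)\,dt$ and $\int_0^{\pi}\big(\Phi_x(t)\sin(t/2)\big)\cos(Nt)\,dt$; the first coefficient function lies in $L^1$ because $\Phi_x$ does, and the second equals $[f(x-t)-f(x^-)]+[f(x+t)-f(x^+)]$, which is bounded, so both integrals vanish as $N\to\infty$ by the Riemann--Lebesgue lemma. This yields $S_Nf(x)\to\tfrac12[f(x^-)+f(x^+)]$, and at any point of continuity $f(x^-)=f(x^+)=f(x)$ gives the last assertion. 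I expect the main obstacle to be extracting the boundedness of the one-sided difference quotients from the comparatively weak notion of ``piecewise continuous'' used here, which does not by itself furnish one-sided limits of $f'$; the remedy is to invoke only the boundedness of $f'$ together with the mean value theorem, rather than any limiting value of $f'$.
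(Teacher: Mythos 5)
Your argument is correct: the paper offers no proof of Theorem~\ref{thm:PointwiseApproxSN}, citing it to Folland, and your Dirichlet-kernel convolution plus Riemann--Lebesgue argument is precisely the classical proof given in such references. Your extra care in extracting the one-sided limits $f(x^\pm)$ and the bound $|f(x\pm t)-f(x^\pm)|\leq Mt$ from the boundedness of $f'$ via the mean value theorem correctly bridges the paper's rather weak Definition~\ref{def:piecewiseSmooth} to the Dini-type condition needed, so no gap remains.
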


Given that we can expect pointwise convergence of partial Fourier sums as $N \rightarrow \infty$, our next concern becomes determining how large $N$ needs to be before we obtain an accurate approximation of $f$.  This, in turn, will inevitably lead one to study the decay of $\hat f_k$ as $k \rightarrow \pm \infty$.

\subsection{The Fourier Coefficients of Piecewise Smooth Functions with Jumps}


Let $f$ be a piecewise smooth function with discontinuities (or {\it jumps}) at $-\pi <  x_1 < \dots < x_J \leq \pi$.  These $x_j$-values will also be called {\it jump locations}.  Considering the $k^{\rm th}$ Fourier coefficient of $f$, 
$$\hat f_{k} = \frac{1}{2 \pi} \int_{-\pi}^{\pi} f(x)e^{-ikx} dx,$$
and splitting up the integral at the jump locations we can see that 
$$ \hat f_{k} = \frac{1}{2 \pi} \int_{-\pi}^{x_1} f(x) e^{-ikx} dx + \left( \sum^{J-1}_{j=1} \int_{x_j}^{x_{j+1}}f(x) e^{-ikx} dx \right) + \int_{x_J}^{\pi}f(x) e^{-ikx} dx.$$
Using integration by parts $J+1$ times we can now see that 
\begin{align*}
\hat f_{k} = \frac{-1}{2\pi i k} \Bigg\{ &f(x) e^{-ikx} \biggr \vert_{-\pi^+}^{x_{1}^{-}} - \int_{-\pi}^{x_1} f^{\prime}(x) e^{-ikx} dx + \left( \sum^{J-1}_{j=1} f(x) e^{-ikx} \biggr \vert_{x_j^+}^{x_{j+1}^{-}} - \int_{x_j}^{x_{j+1}} f^{\prime}(x) e^{-ikx} dx \right) \\ 
&+f(x) e^{-ikx} \biggr \vert_{x_{J}^{+}}^{\pi^-} - \int_{x_J}^{\pi} f^{\prime}(x) e^{-ikx} dx\Bigg\}.
\end{align*}
Collecting terms, and recalling that $f$ is $2 \pi$-periodic, we learn that
\begin{align}
\hat f_{k} = \frac{-1}{2\pi i k}\Bigg\{ &\left[ \sum^{J}_{j=1} \left( f(x_{j}^{-})-f(x_{j}^{+}) \right) e^{-ik x_{j}} \right] \nonumber \\ 
&- \left( \sum^{J-1}_{j=1} \int_{x_j}^{x_{j+1}}f'(x) e^{-ikx} dx \right) - \int_{-\pi}^{x_1}f'(x)e^{-ikx} dx-\int_{x_J}^{\pi}f'(x)e^{-ikx} dx\Bigg\}. \label{equ:FDecayPSfuncs}
\end{align}
This brings us to the following useful definition.

\begin{define}
The {\it jump function} of a piecewise continuous function $f$ is defined by
$$[f](x) := f(x^{+})-f(x^{-})$$
for all $x \in \mathbbm{R}$.  The value $[f](x)$ will also be referred to as the {\it jump height at $x$}. 
\label{Def:JumpFunc}
\end{define}

Through a second use of integration by parts and Definition~\ref{Def:JumpFunc} on the integrals \eqref{equ:FDecayPSfuncs} we arrive at the following well known result.

\begin{theorem}
If $f: \mathbbm{R} \rightarrow \mathbbm{R}$ is piecewise smooth then $$\left| \hat f_{k} - \sum^{J}_{j=1} \frac{[f](x_{j})}{2\pi i k} e^{-ik x_{j}} \right| \leq \frac{C}{k^2}$$
holds for all $k \in \mathbbm{Z} \setminus \{0 \}$, where $C \in \mathbbm{R}^+$ is an absolute constant that only depends on $f''$ and the jump function of $f'$.  As a result, both $\hat f_{k} = \mathcal{O}(1/|k|)$ and $\hat f_{k} \sim \sum^{J}_{j=1} \frac{[f](x_{j})}{2\pi i k} e^{-ik x_{j}}$ are true.
\label{thm:CoefEst}
\end{theorem}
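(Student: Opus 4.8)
The plan is to continue the integration-by-parts computation already begun in the excerpt. Starting from equation~\eqref{equ:FDecayPSfuncs}, the leading term is exactly $\frac{-1}{2\pi i k}\sum_{j=1}^J \bigl(f(x_j^-)-f(x_j^+)\bigr)e^{-ikx_j} = \sum_{j=1}^J \frac{[f](x_j)}{2\pi i k}e^{-ikx_j}$ by Definition~\ref{Def:JumpFunc}, so it suffices to bound the remaining terms, namely $\frac{1}{2\pi i k}$ times the sum of integrals $\int f'(x)e^{-ikx}\,dx$ over each smooth subinterval. First I would apply integration by parts once more to each such integral: on a subinterval $(a,b)$ between consecutive discontinuities of $f'$, we have $\int_a^b f'(x)e^{-ikx}\,dx = \frac{-1}{ik}\bigl[f'(x)e^{-ikx}\bigr]_{a^+}^{b^-} + \frac{1}{ik}\int_a^b f''(x)e^{-ikx}\,dx$, which is valid because $f'$ is differentiable a.e. on such intervals and $f''$ is Lebesgue integrable (conditions 4 and 5 of Definition~\ref{def:piecewiseSmooth}).

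Next I would collect the boundary terms. Summing the $\bigl[f'(x)e^{-ikx}\bigr]$ contributions over all subintervals between neighboring discontinuities of $f'$, and using $2\pi$-periodicity of $f'$ together with the convention $f'(x_1)=\dots=f'(x_J)=0$, the boundary terms reorganize into $\sum_\ell [f'](y_\ell)e^{-iky_\ell}$ where $y_\ell$ ranges over the discontinuities of $f'$ in $(-\pi,\pi]$ — a finite sum (condition 3 of Definition~\ref{def:piecewiseSmooth}) whose absolute value is bounded by $\sum_\ell |[f'](y_\ell)|$, a constant depending only on the jump function of $f'$. The remaining integral terms are bounded in absolute value by $\int_{-\pi}^\pi |f''(x)|\,dx = \|f''\|_{L^1[-\pi,\pi]}$, a constant depending only on $f''$. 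Each of these contributions carries a prefactor of $\frac{1}{|k|}$ from this second integration by parts, on top of the $\frac{1}{2\pi|k|}$ already present, giving the overall $\frac{C}{k^2}$ bound with $C = \frac{1}{2\pi}\bigl(\sum_\ell |[f'](y_\ell)| + \|f''\|_{L^1}\bigr)$, which depends only on $f''$ and the jump function of $f'$ as claimed. The final two assertions follow immediately: $\hat f_k = \mathcal{O}(1/|k|)$ since the main term is $\mathcal{O}(1/|k|)$ and the error is $\mathcal{O}(1/k^2)$, and the asymptotic equivalence $\hat f_k \sim \sum_{j=1}^J \frac{[f](x_j)}{2\pi i k}e^{-ikx_j}$ holds since the difference decays strictly faster (provided at least one jump height is nonzero; otherwise the statement is interpreted as the trivial bound).

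The main obstacle I anticipate is purely bookkeeping rather than conceptual: carefully tracking the boundary evaluations at $x_j^\pm$ and at the additional discontinuities of $f'$, and verifying that the periodicity-driven cancellation at $\pm\pi$ works out exactly, so that what survives is precisely the jump function of $f'$ evaluated at its discontinuities. One subtlety worth stating explicitly is that setting $f'(x_j):=0$ at the original jump locations means those points simply contribute their full jump $[f'](x_j)$ to the boundary sum like any other discontinuity of $f'$, so no special treatment of the $x_j$ is needed in the second round of integration by parts. A second minor point is justifying integration by parts when $f'$ is only differentiable almost everywhere: this is handled by the absolute continuity implied by $f''\in L^1$ on each subinterval, so the fundamental theorem of calculus applies there. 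Once these points are in place, collecting the estimates gives the stated inequality directly.
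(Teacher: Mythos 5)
Your proposal is correct and follows essentially the same route as the paper, which simply invokes ``a second use of integration by parts and Definition~\ref{Def:JumpFunc} on the integrals in \eqref{equ:FDecayPSfuncs}''; your write-up fills in exactly the bookkeeping the paper leaves implicit, with the constant $C$ depending on $\|f''\|_{L^1}$ and the jumps of $f'$ as claimed. No gaps to report.
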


The relatively slow $\mathcal{O}(1/k)$-decay of the Fourier coefficients of piecewise smooth functions with jumps is closely related to Gibbs phenomenon.   In general, the slower $\hat f_k$ decays as $|k| \rightarrow \infty$, the slower $S_{N}f(x)$ from \eqref{Standard Reconstruction} will converge to $f$ as $N$ increases (around points of discontinuity in particular).  

\subsection{Gibbs Phenomenon, and Two Examples}
\label{sec:gibbs&Examples}

Recall that the Fourier coefficients, $\hat f_{k}$, of any piecewise smooth function $f$ with jump discontinuities at $x_1, \dots, x_J$ will exhibit relatively slow $\mathcal{O}(1/k)$-decay (e.g., see Theorem~\ref{thm:CoefEst}).  Despite this fact, Theorem~\ref{thm:PointwiseApproxSN} guarantees that $\lim_{N\to\infty} S_Nf(x) = f(x)$ for all $x \notin \{ x_1, \dots, x_J \}$.  However, this convergence of $S_Nf(x)$ to $f(x)$ for almost all $x \in [-\pi, \pi]$ is not quite as nice as it looks.  It turns out that there exists an absolute constant $c \in (0,1)$ such that for every $N \in \mathbbm{N}$ one can find many $x$ near each jump location $x_j$ with $\left |S_Nf(x) - f(x) \right| > c [f](x_j)$.  In short, no matter how large you make $N$, there are guaranteed to be values of $x$ near $f$'s jump locations where $S_Nf(x)$ approximates $f$ badly.    This unfortunate fact is known as \textit{Gibbs phenomenon}.


As an example, let
\begin{equation}
    h(x) = \left\{
    \arraycolsep=4pt\def\arraystretch{1.5}
    \begin{array}{cc}
        \frac{3}{2} & \frac{-3\pi}{4} \leq x <\frac{-\pi}{2}  \\
        \frac{7}{4}-\frac{x}{2} +\text{sin}(x -\frac{1}{4}) & \frac{-\pi}{4} \leq x <\frac{\pi}{8} \\
         \frac{11}{4}x-5 & \frac{3\pi}{8} \leq x <\frac{3\pi}{4} \\
         0 & \text{else.} \\
    \end{array}
    \right.
    \label{equ:piecewise}
\end{equation}
Note that $h$ has six jump discontinuities at $x_1 = \frac{-3\pi}{4}$, $x_2 = \frac{-\pi}{2}$, $x_3 = \frac{-\pi}{4}$, $x_4 = \frac{\pi}{8}$, $x_5 = \frac{3\pi}{8}$, and $x_6 = \frac{3\pi}{4}$.  See Figure~\ref{fig:h(x)} for its graph.

\begin{figure}[H]
    \centering
    \includegraphics[scale=.8]{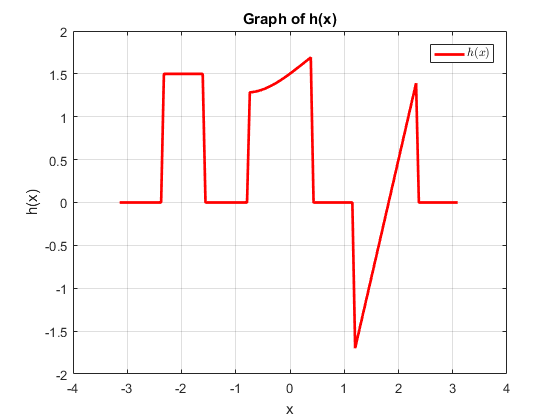}
    \caption{Graph of $h(x)$}
    \label{fig:h(x)}
\end{figure}

Another example of a piecewise smooth function with three jump discontinuities is 
$$s(x)=\begin{cases}
    x^2 & -\pi \leq x \leq \frac{-\pi}{2} \\ e^{x+3} & \frac{-\pi}{2} < x  \leq \frac{\pi}{2} \\
    e^4 x & \frac{\pi}{2} < x \leq \pi
\end{cases}.$$
Note that $x_3 = \pi$ counts as the third jump location for $s$, with the first two at $x_1 = \frac{-\pi}{2}$ and $x_2 = \frac{\pi}{2}$.  See Figure~\ref{fig:s(x)} for its graph.

\begin{figure}[H]
    \centering
    \includegraphics[scale=.6]{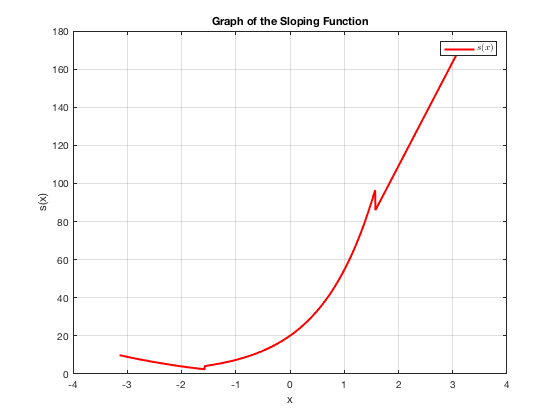}
    \label{piecewise}
    \caption{Graph of $s(x)$}
    \label{fig:s(x)}
\end{figure}

Gibbs phenomenon implies that functions with multiple jumps, such as $h$ in \eqref{equ:piecewise},
generally require many Fourier coefficients in order to be accurately approximated using standard
methods {\it even away from their jumps}.  As seen in Figures \ref{Standard Compare} and \ref{diff n
error}, oscillations around the discontinuities are not reduced as $N$ tends to infinity.
Furthermore, although the oscillations do decrease {\it away} from each jump discontinuity as $N$
grows, they decrease rather slowly.  In the case of MR imaging, jump discontinuities in the function
being measured might, e.g., correspond to tissue boundaries in a patient's body (e.g., bone to
muscle, or from cartilage to fluid). Thus, Gibbs tends to most severely degrade one's ability to
accurately image the boundaries between different tissues. In order to image such regions more
accurately, the effects of Gibbs phenomenon must be reduced. 

\begin{figure}[H]
    \centering
    \includegraphics[scale = .8]{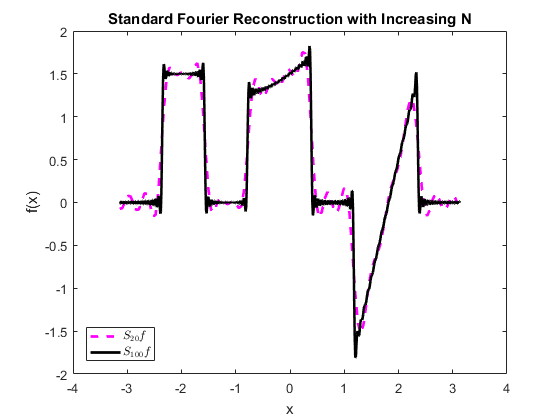}
    \caption{Partial Sum Approximation of $h$ from \eqref{equ:piecewise} for $N = 20$ and $N= 100$.}
    \label{Standard Compare}
\end{figure}

\begin{figure}[H]
    \centering
    \includegraphics[scale = .6]{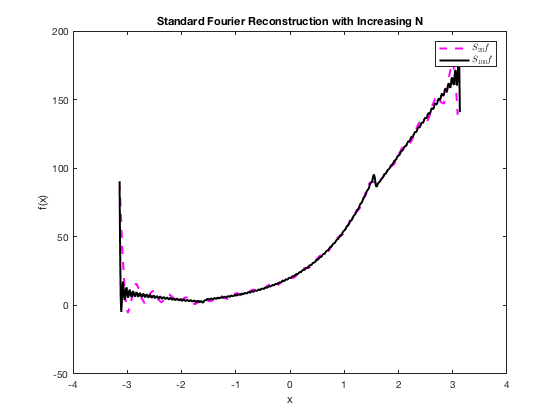}
    \caption{Partial Sum Approximation of $s$ for $N = 20$ and $N= 100$.}
    \label{Standard Compare2}
\end{figure}

\begin{figure}[H]
    \centering
    \includegraphics[scale=.8]{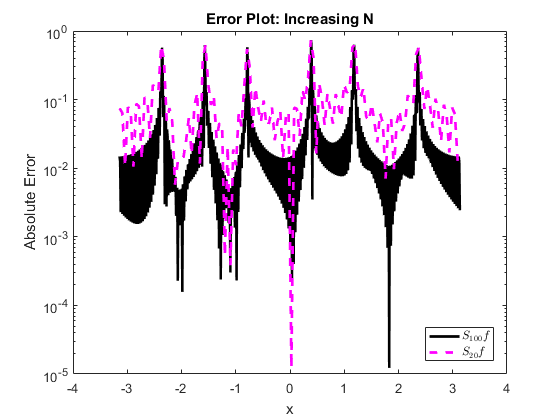}
    \caption{Absolute error comparison between standard reconstruction using $N=20$ and $N=100$ for $h(x)$.}
    \label{diff n error}
\end{figure}

\begin{figure}[H]
    \centering
    \includegraphics[scale=.6]{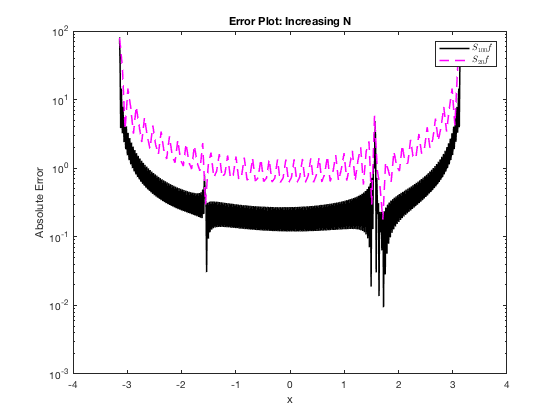}
    \caption{Absolute error comparison between standard reconstruction using $N=20$ and $N=100$ for $s(x)$.}
    \label{diff n error2}
\end{figure}

\section{A Jump Augmented Partial Sum Reconstruction}
\label{sec:JumpAugPartSumRec}



Assuming that the jump locations $x_1, \dots, x_J$ and jump heights $[f](x_{1}), \dots, [f](x_{J})$ of $f$ are known, one can use Theorem~\ref{thm:CoefEst} to estimate the Fourier coefficients of $f$ for all $k \in \mathbbm{Z} \setminus \{0 \}$ by 
\begin{equation}
\hat f_{k}^{est} := \sum_{j = 1}^{J} \frac{[f](x_j)}{2\pi ik}e^{-ik x_{j}}.
\label{Jump Coeffs}
\end{equation}
We can now augment \eqref{Standard Reconstruction} by incorporating these jump-based estimates. The resulting augmented partial sum approximation, $S_{N}^{edge}f$, is defined by 
\begin{equation}
S_{N}^{edge}f(x) := S_{N}f(x) + \sum_{|k| > N}\hat f_{k}^{est} e^{ikx} = \sum_{|k|\leq N}\hat f_{k} e^{ikx} +\sum_{|k| > N}\hat f_{k}^{est} e^{ikx}
\label{Edge 1}
\end{equation}
for all $x \in \mathbbm{R}$.  Note that $S_{N}^{edge}f$ still only utilizes $2N+1$ true Fourier coefficients of $f$.


We would like to use as many terms from the last sum in \eqref{Edge 1} as we can.  Toward this end, let's consider the form of the complete last sum with $\hat f_{0}^{est} := 0$.  We have that
\begin{align*}
\sum^{\infty}_{k = -\infty} \hat f_{k}^{est} e^{ikx} &= \sum_{0<|k|<\infty} \left( \sum_{j = 1}^{J}\frac{[f](x_{j})}{2\pi ik}e^{-ikx_{j}} \right) e^{ikx} \\
&=\sum_{j=1}^{J}[f](x_{j}) \left( \sum_{0<|k|<\infty} \frac{e^{-ikx_j}}{2\pi ik}e^{ikx} \right).
\end{align*}
Note that the $k^{\rm th}$ Fourier coefficient of the $2\pi$-periodic {\it ramp function} $r_j(x)$, defined by
\begin{equation}\label{ramp}
r_j(x) :=\begin{cases}
    \frac{-\pi-x}{2\pi}, & x<x_j \\
    \frac{\pi-x}{2\pi}, & x>x_j
\end{cases}
\end{equation}
for all $x \in [-\pi, \pi]$, is given by $(\widehat{r_j})_k = \frac{e^{-ikx_j}}{2\pi ik}$ for all $k \in \mathbbm{Z} \setminus \{0\}$.\footnote{See Figure~\ref{ramp function} for an example ramp function.}  Also, $(\widehat{r_j})_0 = 0$.
\begin{figure}
\centering
\includegraphics[scale = .8]{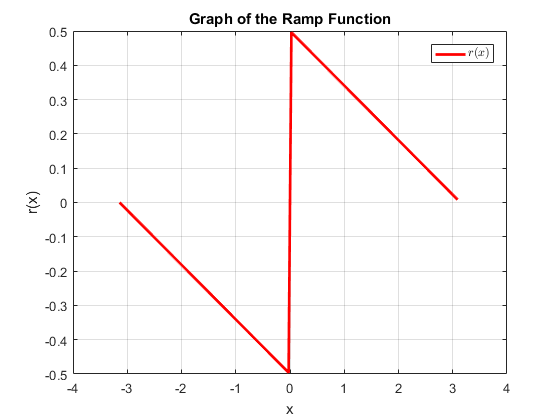}
\caption{Ramp function $r_j(x)$ with jump at $x_j=0$.}
\label{ramp function}
\end{figure}
Thus, we have that 
$$\sum^{\infty}_{k = -\infty} \hat f_{k}^{est} e^{ikx} = \sum_{j=1}^{J}[f](x_{j})r_{j}(x).$$
We are now able to give a more easily computable closed for expression for $S_{N}^{edge}f$ by noting that
\begin{equation}
S_{N}^{edge}f(x)=\sum_{k = -N}^{N} (\hat{f}_{k}-\hat{f}^{est}_{k})e^{ikx} +\sum_{j=1}^{J}[f](x_j)r_j(x).
\label{Edge 2}
\end{equation}

Figure \ref{True Edge} shows this edge-augmented reconstruction using true jump information, $S_{N}^{edge}f$, compared to the standard reconstruction, $S_{N}f$. Notice the great reduction in the Gibbs Phenomenon in the edge-augmented reconstruction. Another feature to take note of is that even thought the standard reconstruction uses more coefficients, the edge-augmented reconstruction is still more accurate (see Figure~\ref{standard v edge1} for an absolute error plot). 

\begin{figure}[H]
    \centering
    \includegraphics[scale=.8]{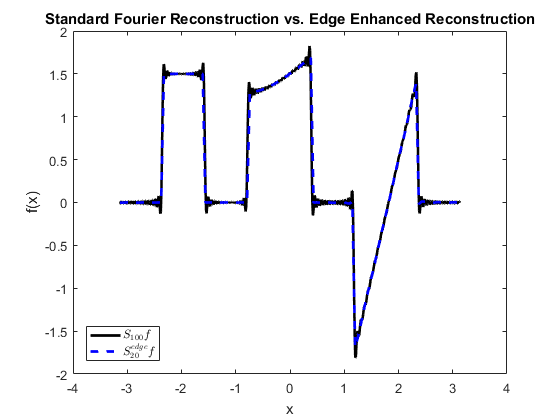}
    \caption{Standard Fourier Reconstruction, $S_{N}f$, compared to the edge-enhanced reconstruction, $S_{N}^{edge}f$.  The standard method uses $N = 100$ coefficients while the new reconstruction uses only $20$, in addition to true jump information.}
    \label{True Edge}
\end{figure}

\begin{figure}[H]
    \centering
    \includegraphics[scale=.6]{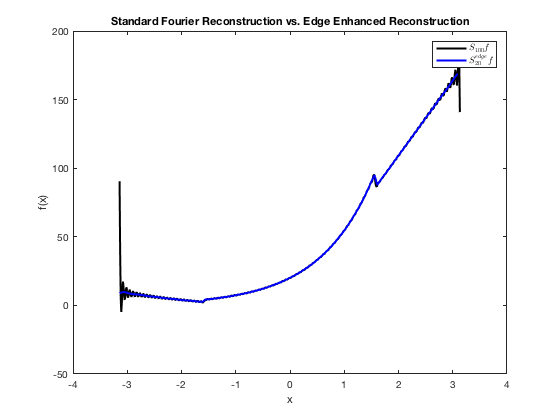}
    \caption{Standard Fourier Reconstruction, $S_{N}f$, of sloping function compared to the edge-enhanced reconstruction, $S_{N}^{edge}f$. The standard method uses $N=100$ coefficients while the new reconstruction uses only $20$, in addition to true jump information.}
    \label{Sloping True Edge}
\end{figure}

\begin{figure}[H]
    \centering
    \includegraphics[scale = .8]{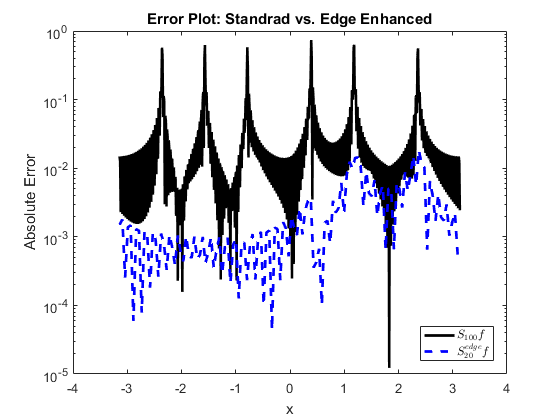}
    \caption{Error plot for Figure \ref{True Edge}}
    \label{standard v edge1}
\end{figure}

\begin{figure}[H]
    \centering
    \includegraphics[scale = .6]{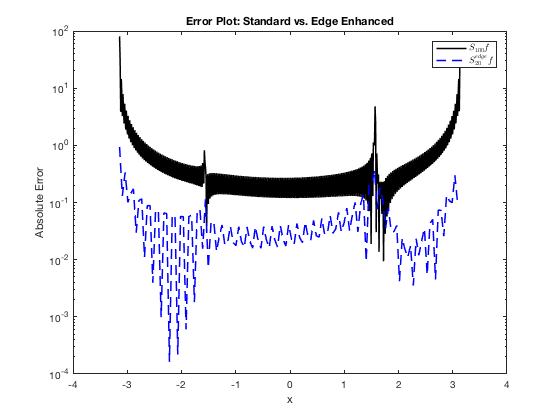}
    \caption{Error plot for Figure \ref{Sloping True Edge}}
    \label{standard v edge2}
\end{figure}

As Figures~\ref{standard v edge1}~and~\ref{standard v edge2} demonstrate, the jump-augmented sum $S_{N}^{edge}f(x)$ is numerically superior to the standard partial sum $S_{N}f$ when it comes to approximating both of our example functions from Section~\ref{sec:gibbs&Examples}.  In the next subsection we will prove that this is indeed the case more generally.

\subsection{Analytical Error Bounds:  Jump Information Helps!}
\label{sec:BasicThErrorBounds}

We will begin by providing error bounds for $\| f - S_{N}f \|_2$ in Section~\ref{sec:analStandardError}.  These error bounds are well know -- however, we include them here for the sake of completeness.  Then, in Section~\ref{sec:analJumpAugError}, we will provide error bounds for $\| f - S_{N}^{edge}f \|_2$.  As we will see, $S_{N}^{edge}f$ is guaranteed to converge to $f$ in norm much more quickly than $S_{N}f$ does as $N \rightarrow \infty$.  In fact, convergence is fast enough that $S_{N}^{edge}f$ is guaranteed to converge to $f$ uniformly as $N \rightarrow \infty$.

\subsubsection{Error Bound for $\| f - S_{N}f \|_2$}
\label{sec:analStandardError}

We will begin with a simple theorem that relates the two-norm error $\| f - S_{N}f \|_2$ to the decay of the Fourier coefficients of $f$.

\begin{theorem}
Let $f: \mathbbm{R} \rightarrow \mathbbm{C}$ be a $2\pi$-periodic $L^2([-\pi,\pi])$ function with Fourier coefficients satisfying $|\hat{f}_k|\leq\frac{c}{|k|^p}$ for a given $p \in (\frac{1}{2}, \infty)$ and $c \in \mathbbm{R}^+$ that are both independent of $k$.  Let $S_Nf$ be the partial Fourier sum approximation of $f$ as per \eqref{Standard Reconstruction}. 
Then, 
$$||f -S_Nf ||_2\leq \sqrt{\frac{2c^2}{(2p-1)N^{2p-1}}}.$$
\label{thm:BasicPSapprox}
\end{theorem}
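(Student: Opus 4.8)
The plan is to reduce everything to Parseval's identity together with an elementary integral comparison for the tail of a $p$-series. First I would observe that $f - S_N f$ is a $2\pi$-periodic $L^2([-\pi,\pi])$ function whose Fourier coefficients are exactly $\hat f_k$ for $|k| > N$ and $0$ for $|k| \leq N$. Applying Parseval's identity (in the normalization consistent with the convention \eqref{Fourier Coefficients} for the coefficients $\hat f_k$) then gives
\[
\| f - S_N f \|_2^2 \;=\; \sum_{|k| > N} |\hat f_k|^2 .
\]
The hypothesis $p > \tfrac{1}{2}$ guarantees that this tail series converges, so the manipulation is legitimate.

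Next I would insert the decay bound $|\hat f_k| \leq c/|k|^p$ termwise and use the symmetry $k \mapsto -k$ to fold the sum onto the positive integers:
\[
\| f - S_N f \|_2^2 \;\leq\; \sum_{|k| > N} \frac{c^2}{|k|^{2p}} \;=\; 2 c^2 \sum_{k = N+1}^{\infty} \frac{1}{k^{2p}} .
\]
The remaining step is the standard integral-test estimate: since $x \mapsto x^{-2p}$ is positive and decreasing on $[N,\infty)$ and $2p - 1 > 0$,
\[
\sum_{k = N+1}^{\infty} \frac{1}{k^{2p}} \;\leq\; \int_{N}^{\infty} \frac{dx}{x^{2p}} \;=\; \frac{N^{1 - 2p}}{2p - 1} .
\]
Combining the last two displays gives $\| f - S_N f \|_2^2 \leq \dfrac{2 c^2}{(2p-1)\,N^{2p-1}}$, and taking square roots yields the claimed bound.

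There is no substantive obstacle here; the only points needing a little care are (i) ensuring the version of Parseval's identity invoked matches the Fourier normalization so that no stray factor of $2\pi$ survives, and (ii) recording that the integral comparison is valid precisely because $2p > 1$ — which is also exactly what makes the right-hand side both finite and decaying in $N$. This simultaneously clarifies the role of the hypothesis $p \in (\tfrac{1}{2},\infty)$: for $p \leq \tfrac{1}{2}$ the tail series need not converge and no bound of this form can be expected.
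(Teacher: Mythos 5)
Your proposal is correct and follows essentially the same route as the paper: Parseval's identity to express $\|f - S_N f\|_2^2$ as the tail sum $\sum_{|k|>N}|\hat f_k|^2$, termwise insertion of the decay bound, an integral comparison using $2p>1$, and a final square root. Your added remarks on the normalization of Parseval and on why $p>\tfrac12$ is needed are sensible but do not change the argument.
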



\begin{proof}
We have that 
$$\begin{aligned}
\|f -S_Nf \|^2_2 & =  \sum_{|k|>N}| \hat{f}_k|^2 \hspace{.7in} &\textrm{(by Parseval's Theorem)}\\
& \leq \sum_{|k|>N}\left( \frac{c}{|k|^p} \right)^2 & \\ 
& \leq 2c^2 \int_{N}^{\infty}\frac{1}{x^{2p}}~dx & \\
&= \frac{2c^2}{(2p-1)N^{2p-1}}.
\end{aligned}$$
Taking square roots now establishes the desired result.
\end{proof}

Theorem~\ref{thm:BasicPSapprox} immediately provides us with the following corollary.
 
\begin{cor}
If $f$ is piecewise smooth then $$\|f-S_Nf\|_2\leq\sqrt{\frac{2 c^2}{N}}$$
for some constant $c \in \mathbbm{R}^+$ which is independent of $N$. 
\label{cor:PS2normError}
\end{cor}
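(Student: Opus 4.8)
The plan is to combine Theorem~\ref{thm:CoefEst} with Theorem~\ref{thm:BasicPSapprox}, the latter applied with the exponent $p = 1$. First I would check the ambient hypotheses of Theorem~\ref{thm:BasicPSapprox}: a piecewise smooth $f$ is in particular piecewise continuous, hence Riemann integrable and bounded on $[-\pi,\pi]$, so $f \in L^2([-\pi,\pi])$ and $f$ is $2\pi$-periodic as required.

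Next I would extract a bound of the form $|\hat f_k| \le c/|k|$ for all $k \in \mathbbm{Z}\setminus\{0\}$ directly from Theorem~\ref{thm:CoefEst}. By the triangle inequality,
$$|\hat f_k| \le \left|\hat f_k - \sum_{j=1}^J \frac{[f](x_j)}{2\pi i k}e^{-ikx_j}\right| + \sum_{j=1}^J \frac{|[f](x_j)|}{2\pi |k|} \le \frac{C}{k^2} + \frac{1}{2\pi|k|}\sum_{j=1}^J |[f](x_j)|,$$
where $C$ is the constant from Theorem~\ref{thm:CoefEst} (depending only on $f''$ and the jump function of $f'$). Since $|k| \ge 1$ implies $1/k^2 \le 1/|k|$, this gives $|\hat f_k| \le c/|k|$ with $c := C + \tfrac{1}{2\pi}\sum_{j=1}^J |[f](x_j)|$, a quantity depending only on $f$ and in particular independent of both $k$ and $N$.

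Finally I would invoke Theorem~\ref{thm:BasicPSapprox} with this $c$ and $p = 1 \in (\tfrac12,\infty)$. Then $2p-1 = 1$, so $N^{2p-1} = N$, and the theorem yields exactly $\|f - S_N f\|_2 \le \sqrt{2c^2/N}$, which is the claimed bound.

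I do not expect any genuine obstacle in this argument. The only mildly delicate point is the passage from the $\mathcal{O}(1/k^2)$ remainder in Theorem~\ref{thm:CoefEst} to the clean $c/|k|$ envelope needed to apply Theorem~\ref{thm:BasicPSapprox}; as indicated above, this is just the triangle inequality combined with the elementary inequality $1/k^2 \le 1/|k|$ valid for all integers $k$ with $|k|\ge 1$. One should also take a moment to confirm that the constant $c$ so produced really is $N$-independent, which it is, since it is assembled from $C$ and the fixed jump heights $[f](x_1),\dots,[f](x_J)$ of $f$.
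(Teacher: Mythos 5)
Your proposal is correct and follows the same route as the paper: the paper's proof is simply the observation that Theorem~\ref{thm:CoefEst} lets one apply Theorem~\ref{thm:BasicPSapprox} with $p=1$, and you do exactly this, just spelling out the triangle-inequality step that turns the $\mathcal{O}(1/k^2)$ estimate into an explicit $c/|k|$ envelope with an $N$-independent constant.
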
 

\begin{proof}
Theorem~\ref{thm:CoefEst} tells us that we may apply Theorem~\ref{thm:BasicPSapprox} with $p=1$.
\end{proof}


The astute reader will be able to see that Corollary~\ref{cor:PS2normError} is actually quite sharp in the case of piecewise smooth functions with jump discontinuities (i.e., there is a similar asymptotic {\it lower bound} for $||f-S_Nf||_2$ with respect to $N$ for such functions).  Given this, we are now ready to show that $S_{N}^{edge}f$ converges to $f$ in norm faster than $S_Nf$ does for all piecewise smooth functions with jump discontinuities.

\subsubsection{Error Bounds for $\| f - S_{N}^{edge}f \|_2 $}
\label{sec:analJumpAugError}

Recall from \eqref{Jump Coeffs} and \eqref{Edge 1} that 
$$S_N^{edge}f(x)=\sum_{|k|\leq N}\hat{f}_ke^{ikx}+\sum_{N<|k|<\infty} \hat{f}_k^{est}e^{ikx}$$ 
where
$$\hat f_{k}^{est} = \sum_{j = 1}^{J} \frac{[f](x_j)}{2\pi ik}e^{-ik x_{j}}.$$
We can establish an error bound for $\| f - S_{N}^{edge}f \|_2 $ using a very similar proof to that of Theorem~\ref{thm:BasicPSapprox}.



\begin{theorem}\label{fsnedgeerror}
If $f$ is piecewise smooth then  
$$||f-S_N^{edge}f||_2\leq\sqrt{\frac{2c^2}{3N^3}}$$
for some constant $c \in \mathbbm{R}^+$ which is independent of $N$. 
\end{theorem}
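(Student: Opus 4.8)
The plan is to run the proof of Theorem~\ref{thm:BasicPSapprox} essentially verbatim, but applied to the ``corrected'' coefficient sequence $\hat f_k - \hat f_k^{est}$ rather than to $\hat f_k$ itself, exploiting the fact that the correction buys one extra order of decay. First I would record that $f - S_N^{edge}f$ has Fourier coefficients equal to $0$ for $|k| \leq N$ and to $\hat f_k - \hat f_k^{est}$ for $|k| > N$; this is immediate from \eqref{Edge 1} once one knows $S_N^{edge}f \in L^2([-\pi,\pi])$, which holds because $|\hat f_k^{est}| \leq \frac{1}{2\pi|k|}\sum_{j=1}^{J}|[f](x_j)|$ is square-summable. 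Parseval's theorem (in the normalization used in the proof of Theorem~\ref{thm:BasicPSapprox}) then gives
$$\|f - S_N^{edge}f\|_2^2 = \sum_{|k|>N} \left|\hat f_k - \hat f_k^{est}\right|^2.$$

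Next I would invoke Theorem~\ref{thm:CoefEst}, which is precisely the statement that $\bigl|\hat f_k - \hat f_k^{est}\bigr| = \bigl|\hat f_k - \sum_{j=1}^{J}\frac{[f](x_j)}{2\pi i k}e^{-ikx_j}\bigr| \leq C/k^2$ for all $k \in \mathbbm{Z}\setminus\{0\}$, where $C$ depends only on $f''$ and the jump function of $f'$. Substituting this bound into the Parseval identity and comparing the resulting sum with an integral exactly as in Theorem~\ref{thm:BasicPSapprox} (the case $p = 2$, $c = C$) yields
$$\|f - S_N^{edge}f\|_2^2 \leq \sum_{|k|>N}\frac{C^2}{k^4} \leq 2C^2\int_N^\infty \frac{dx}{x^4} = \frac{2C^2}{3N^3},$$
and taking square roots finishes the proof with $c := C$, which is independent of $N$ as required.

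An equivalent, and perhaps more conceptual, packaging that I would mention is to introduce the auxiliary function $g := f - \sum_{j=1}^{J}[f](x_j)\, r_j$ built from the ramp functions of \eqref{ramp}. Since $(\widehat{r_j})_k = e^{-ikx_j}/(2\pi i k)$, the function $g$ has Fourier coefficients $\hat g_k = \hat f_k - \hat f_k^{est}$, so $g$ is a continuous (the ramps cancel the jumps of $f$ exactly) piecewise-smooth function with $|\hat g_k| = \mathcal{O}(1/k^2)$; moreover the closed form \eqref{Edge 2} shows $S_N^{edge}f = S_N g + \sum_{j=1}^{J}[f](x_j) r_j$, hence $f - S_N^{edge}f = g - S_N g$, and Theorem~\ref{thm:BasicPSapprox} applied to $g$ with $p=2$ delivers the bound immediately.

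I do not anticipate any substantive obstacle: the argument is just the $p = 2$ analogue of the $p = 1$ reasoning already used for Corollary~\ref{cor:PS2normError}. The only step that calls for a little care is the first one --- verifying that all the series manipulations (the definition of $S_N^{edge}f$ through the tail $\sum_{|k|>N}\hat f_k^{est}e^{ikx}$, and the identification of its Fourier coefficients, or equivalently the $L^2$-convergent ramp expansion $\sum_k \hat f_k^{est}e^{ikx} = \sum_j [f](x_j) r_j$) are legitimate in $L^2$, so that Parseval genuinely applies. Once that bookkeeping is in place, the remainder is the routine integral-comparison computation shown above.
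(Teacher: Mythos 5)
Your proposal is correct and follows essentially the same route as the paper's own proof: Parseval's theorem applied to the tail coefficients $\hat f_k - \hat f_k^{est}$, the $C/k^2$ bound from Theorem~\ref{thm:CoefEst}, and the integral comparison giving $\frac{2C^2}{3N^3}$. The extra care you take in justifying the $L^2$ manipulations, and the alternative packaging via $g = f - \sum_j [f](x_j)\,r_j$, are welcome additions but do not change the substance of the argument.
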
 
  
\begin{proof}
We have that
$$\begin{aligned}
\|f -S_N^{edge}f \|^2_2 & =  \sum_{|k|>N}| \hat{f}_k -\hat{f}_k^{est}|^2 \hspace{.7in} &\textrm{(by Parseval's Theorem)}\\
& \leq \sum_{|k|>N}\left( \frac{C}{k^2} \right)^2 & \textrm{(By Theorem~\ref{thm:CoefEst}.)}\\
& \leq 2C^2 \int_{N}^{\infty}\frac{1}{x^{4}}~dx & \\
&= \frac{2C^2}{3 N^{3}}. & 
\end{aligned}$$
Taking square roots now yields the desired result.
\end{proof}


Similar to Corollary~\ref{cor:PS2normError} above, a very astute reader will be able to see that
Theorem~\ref{fsnedgeerror} is sharp with respect to its asymptotic $N$-dependence in the case of
piecewise smooth functions that have, e.g., jump discontinuities in their first derivatives.  As an
example, Figure \ref{true and standard two norm error} plots the $L^2$-error of each reconstruction
method for the function $h$ from \eqref{equ:piecewise} as $N$ increases. Notice that the slope for
the standard reconstruction error curve, $\log \left(\| h - S_Nh \|_2 \right)$, in Figure \ref{true
and standard two norm error} is about $-\frac{1}{2}$ which is consistent with the exponent on $N$ in
Corollary~\ref{cor:PS2normError}.  Similarly, the slope of the jump-augmented reconstruction error
curve, $\log \left(\|h - S_N^{edge}h \|_2 \right)$, in Figure \ref{true and standard two norm error}
is about $-\frac{3}{2}$ as expected from Theorem~\ref{fsnedgeerror}.

%
\begin{figure}[h]
\centering
\includegraphics[scale = .8]{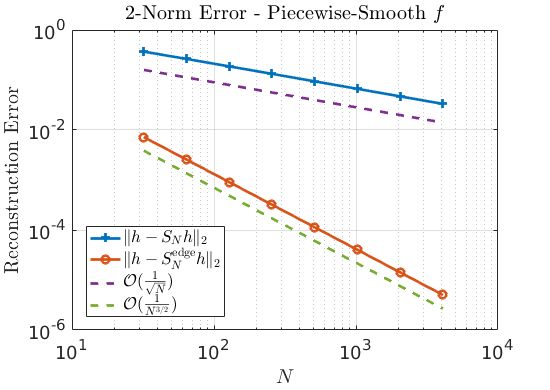}
\caption{Two norm error of the standard reconstruction, $S_Nh$, compared to the two norm error of the jump-augmented reconstruction using true jump information, $S_N^{edge}h$, for the function $h$ from \eqref{equ:piecewise}.}
\label{true and standard two norm error}
\end{figure}

 The challenge now is that the jump-augmented Fourier sum approximation of $f$, $S_N^{edge}f$, requires one to know what the true jump heights and jump locations of $f$ are.  However, this information is often unavailable. When an MRI scan is done, for example, one only has access to a few Fourier coefficients. What is required, then, is a method for finding estimates of a given piecewise smooth function's jump locations and jump heights, as well as the total number of jumps, that only uses a few Fourier coefficients of the function $f$ as input. Two methods that can help us do this are $(i)$ Prony's Method, and $(ii)$ concentration kernels with conjugate sums. Each of these are briefly described in the following section. 
  
\section{Numerical Methods for Approximately Computing $S_{N}^{edge}f$ Using Just a Few Fourier Coefficients}
\label{Sec:NumMethJumpAugSum}
 
 In this section we briefly summarize two different approaches that one can use in order to estimate the jump locations and heights present in a piecewise smooth function using only its first few Fourier series coefficients.  We will begin by describing a simple Prony-type method for the problem.
  
\subsection{Prony's Method}
\label{sec:prony}
Prony's method is a technique for transforming a non-linear parameter fitting problem into a
sequence of linear and root-finding problems that are easier to solve. Below we will briefly
demonstrate how Prony's Method can be used to estimate the jump heights and jump locations of a
piecewise smooth function using only a few of its Fourier coefficients.  For a simple introduction
to Prony's method see, e.g., \cite[Chapter 2]{Prony}.

Fix a parameter $T$.  When given sufficiently many observed values, $y[k]$, Prony's method solves a system of equations of the form  
\begin{align} 
y[k] &=  \sum_{j=1}^{J} C_j  \left( e^{(\sigma_{j}+i2\pi f_{j})T} \right)^{k},\quad \textrm{with} \quad k = 1,2,3,...
\label{equ:PronySysSolved}
\end{align}
for the unknowns $C_j, \sigma_{j},$ and $f_{j}$, for all $j = 1, \dots, J$.
In order to use Prony's method for our jump height and jump location estimation problem we will make these equations look like the ones approximately provided by Theorem~\ref{thm:CoefEst}.

Using Theorem~\ref{thm:CoefEst} and neglecting $\mathcal{O}(1/k^2)$-terms, we have that
\begin{equation}
\hat f_{k} \approx \sum_{j=1}^{J}\frac{[f](x_{j})}{2\pi ik}e^{-ikx_{j}}, \quad \textrm{with} \quad k = 1,2,...,N.
\end{equation}
Multiplying through by $2\pi i k$ we can see that
\begin{equation*}
\left( 2\pi ik \right) \hat f_{k} \approx \sum_{j=1}^{J} [f](x_{j})e^{-ikx_{j}}
\end{equation*}
holds for all $k \neq 0$.
Now our unknowns can be renamed to match those of the system that Prony's method solves \eqref{equ:PronySysSolved} as follows:
Use the parameter $T = 1$, and let $C_j= [f](x_{j}) \in \mathbbm{R}$, $\sigma_j = 0$, and $2\pi f_j = x_j$  for all $j = 1, \dots, J$.  Finally, take our observed values to be
\begin{equation} 
y[k] = (2\pi ik) \hat f_{k}.
\label{y}
\end{equation}
Given this setup a Prony method can now be used in order to estimate all $J$ jump locations $x_1,
\dots, x_J$ and $J$ jump heights $[f](x_{1}), \dots, [f](x_{J})$ using only the measured Fourier
coefficients $\hat f_{k}$.  See \cite[Chapter 2]{Prony} for details.

Figure \ref{Prony's} compares the jump-augmented reconstruction method of
Section~\ref{sec:JumpAugPartSumRec} using estimated jump locations and jump heights from Prony's
Method against the same method using true jump information. It performs relatively well despite the
small number of Fourier coefficients utilized.  See Figure \ref{jump_error} for an absolute error
plot.  Prony's method still has some drawbacks, however. For example, it needs to have a relatively
accurate estimate for the total number of true jumps, $J$, in order for the method to work well. For
this reason, among others, we also looked into the use of concentration kernels and conjugate sums
for jump location and height estimation.

\begin{figure}[H]
    \centering
    \includegraphics[scale=.8]{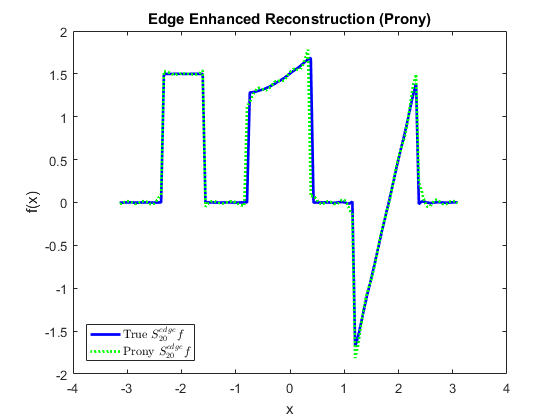}
    \caption{Comparing the use of true jump information to that of the estimates obtained by Prony's Method}
    \label{Prony's}
\end{figure} 

\begin{figure}[H]
    \centering
    \includegraphics[scale=.6]{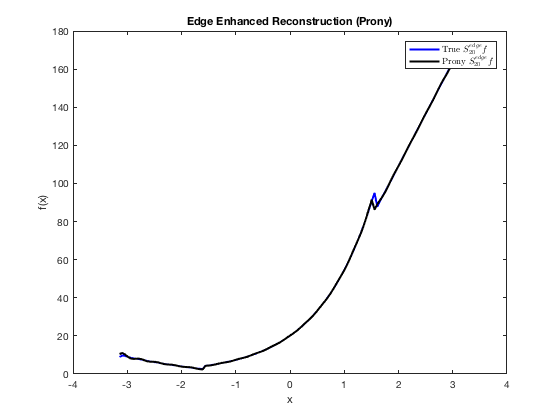}
    \caption{Comparing the use of true jump information to that of the estimates obtained by Prony's Method}
    \label{Sloping Prony's}
\end{figure} 

\subsection{Concentration Kernels}
\label{sec:conc}
Identifying jump information from spectral data is a challenging task since it involves the
extraction of local features from global data. In particular, Fourier reconstructions of
piecewise-smooth functions suffer from Gibbs oscillations which can often be mistaken for jumps by
conventional edge detectors, such as those based on divided differences. The concentration method of
edge detection \cite{conc_method_Gelb_1999, conc_method_Gelb_2000}, which was specifically devised
to address this problem, computes an approximation to the jump function $[f]$ (Def.
\ref{Def:JumpFunc}) using a filtered Fourier partial sum of the form 
\begin{equation}
  \label{eq:conc_sum}
  K_N^\sigma[f](x) := \sum_{|k| \leq N} \widehat f_k \left( i \, 
  \mbox{sgn}(k) \, \sigma\left( |k|/N \right) \right) e^{ikx}.
\end{equation}
Here, $\sigma$, known as a {\em concentration factor}, defines a special class of edge detection
``filters''. Under certain admissibility conditions (see \cite{conc_method_Gelb_2000} for details),
the Fourier partial sum (\ref{eq:conc_sum}) ``concentrates'' at the singular support of $f$ and we
have the following property (\cite[Theorem 2.3]{conc_method_theorem_2008}):
\[ K_N^\sigma[f](x) = [f](x) + \left \lbrace 
\begin{array}{ll}
    \mathcal O \left(  \frac {\log N}N \right),  & 
            d(x) \lesssim \frac{\log N}N \\
    \mathcal O \left(  \frac {\log N}{(N d(x))^s}\right),  & 
            d(x) \gg \frac 1 N, \\
\end{array}
\right.\]
where $d(x)$ denotes the distance between $x$ and the nearest jump discontinuity and $s = s_\sigma >
0$ depends on the choice of $\sigma$.
%
%
\begin{table}[!t]
\renewcommand{\arraystretch}{1.3}
\caption{Examples of concentration factors}
\label{tab:cfac}
\centering
\begin{tabular}{|c|c|}
\hline
\textbf{Factor} & \textbf{Expression} \\
\hline
Trigonometric & $\displaystyle \sigma_{T}(\eta) = 
	\frac{\pi \sin (\alpha \, \eta)}{Si(\alpha)}$ \\
 & $\displaystyle Si(\alpha) = \int _{0}^{\alpha} \frac{\sin (x)}{x} \, dx$ \\
\hline
Polynomial & $\displaystyle \sigma_{P}(\eta) = p \, \pi \, \eta^{p}$ \\
& $p$ is the order of the factor\\
\hline
Exponential & $ \sigma_{E}(\eta) = 
	C \eta \exp \left[ \frac{1}{\alpha \, \eta \, (\eta - 1)} \right]$\\
& $C$ - normalizing constant\\
& $\alpha$ - order \\
& $C = \frac{\pi}{\int _{\frac{1}{N}} ^{1-\frac{1}{N}}
	\exp \left[ \frac{1}{\alpha \, \tau \, (\tau - 1)} \right]d \tau }$\\
\hline
\end{tabular}
\end{table}
\begin{figure}[!t]
\centering
\includegraphics[trim = 42.5mm 80mm 30mm 100mm, clip, 
        scale=.5]{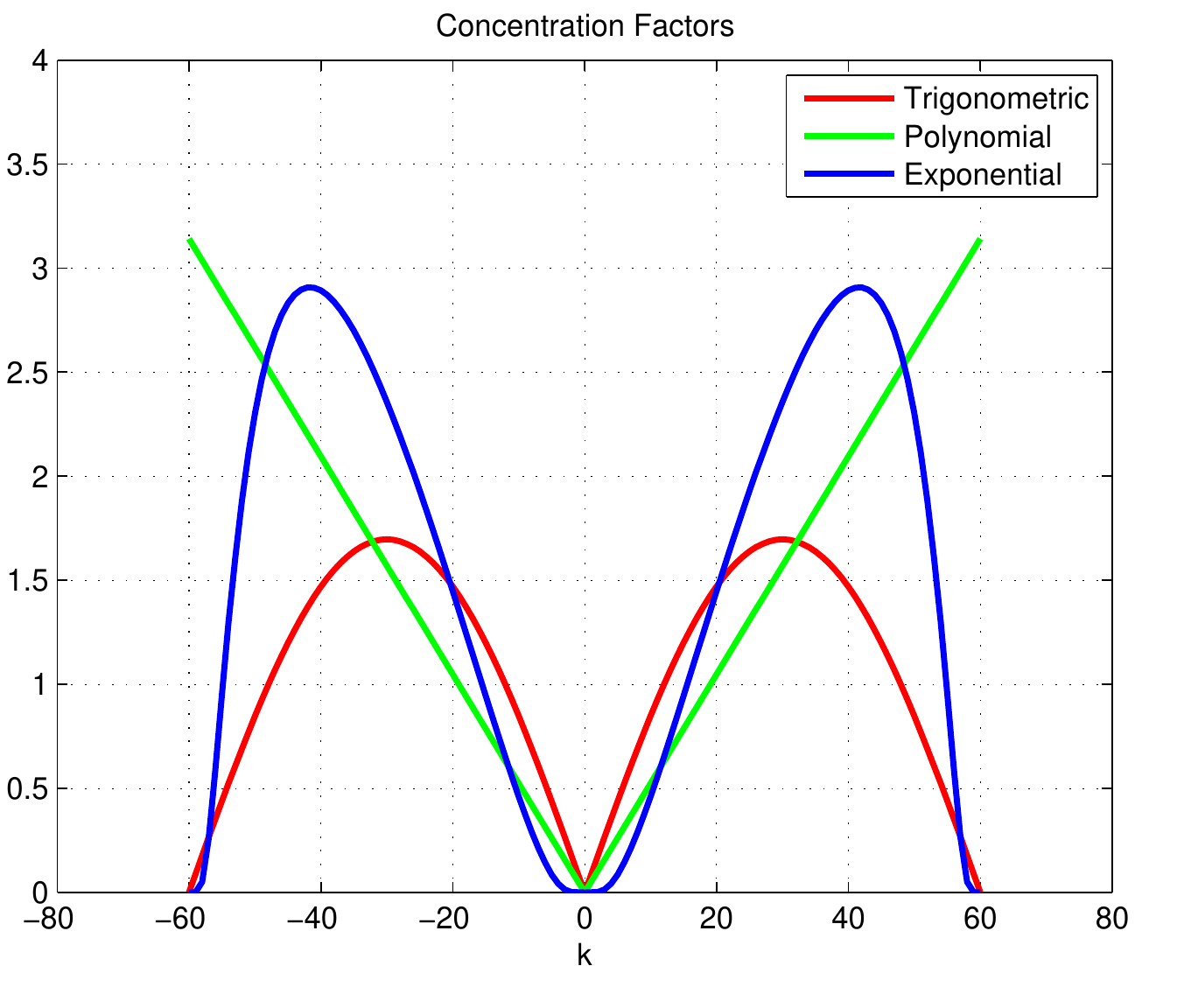} 
\caption{Envelopes of Concentration Factors in Fourier space}
\label{fig:cfac}
\end{figure}
Some common families of concentration factors and their corresponding plots in Fourier space are
provided in Table \ref{tab:cfac} and Fig. \ref{fig:cfac} respectively.  The jump function
approximation produced by each concentration factor $\sigma$ is unique and has certain
characteristic features. For example, the first order Polynomial concentration factor $\sigma_P,
P=1$ produces a jump approximation which is equivalent to computing a scaled first derivative of the
Fourier partial sum $S_Nf$. A jump function approximation using the Trigonometric concentration factor
utilizes the approximately $9\%$ Gibbs overshoots and undershoots on either side of a jump to
determine both jump locations and heights. Finally, the exponential concentration factor $\sigma_E$
is designed so as to yield root-exponentially small jump approximations away from the jumps. 

Once the jump approximation $K^\sigma_N[f]$ -- which can be evaluated efficiently using FFTs -- has
been computed, the jump locations and values can be identified using simple thresholding and peak
finding. This provides estimates of jump locations accurate to the nearest grid point used in the
evaluation of the concentration sum (\ref{eq:conc_sum}). However, we note that for well separated
jumps (at least $\mathcal O(\log N/N)$ away from each other), $K_N^\sigma [f]$ is a locally convex
function and the jump location estimates can be improved by implementing a local gradient-descent
based optimization procedure. Alternatively, a non-linear optimization routine (such as one based on
the Levenberg-Marquardt algorithm) can be employed in conjunction with Theorem \ref{thm:CoefEst} to
obtain sub-grid accuracy for jump locations and jump values. In Fig. \ref{fig:conc_method_example},
a representative jump function approximation using $N=50$ Fourier coefficients of the
piecewise-smooth test function (\ref{equ:piecewise}) and the trigonometric concentration factor is
plotted. Matlab's {\tt findpeaks} was used to perform peakfinding and the initial jump location
estimates were refined using {\tt fsolve} to implement a non-linear fitting of Theorem
\ref{thm:CoefEst}. While we defer analysis of the sub-grid accuracy of the concentration method to
future research, we present empirical evidence of this in Figure
\ref{fig:jump_conv}, where the error in estimating jump locations and values is plotted as a
function of the number of Fourier modes $N$. For reference, results using the Prony procedure from
\S \ref{sec:prony} -- which assumes that the number of jumps is known apriori -- is also plotted.
The figure shows that both jump locations and values can be estimated at an accuracy $\mathcal
O(1/N^2)$ even though we are given only $\mathcal O(N)$ Fourier coefficients. 

\begin{figure}[htbp]
    \centering
    \begin{subfigure}[b]{0.475\textwidth}
        \includegraphics[trim = 1.2in 3.2in 1.5in 2.5in, 
            clip=true, scale = .55]{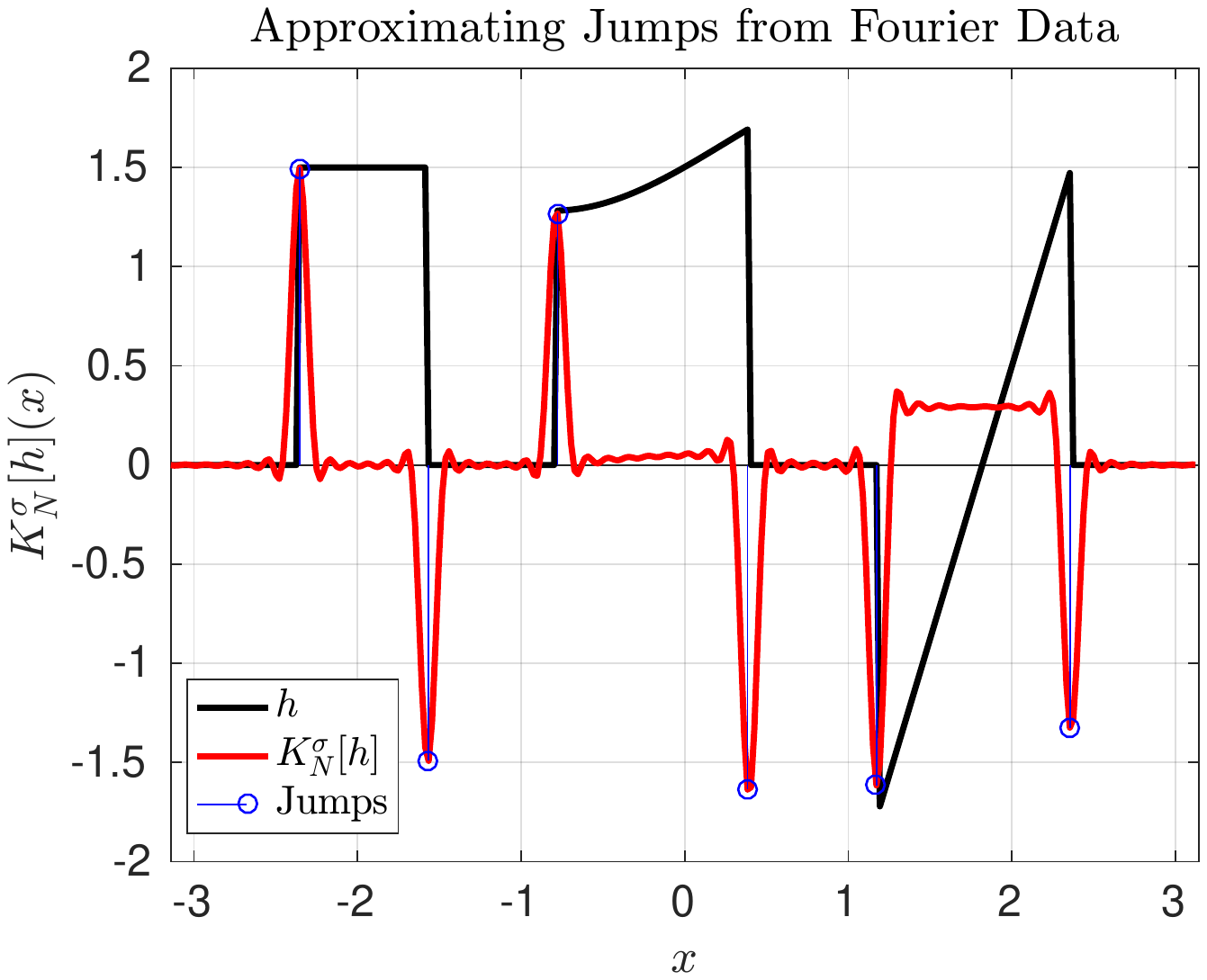} 
        \caption{Jump Approximation using $N=50$}
        \label{fig:conc_method_example}
    \end{subfigure}
    \hfill
    \begin{subfigure}[b]{0.475\textwidth}
        \includegraphics[trim = 1.2in 3.2in 1.5in 2.5in, 
            clip=true, scale = .55]{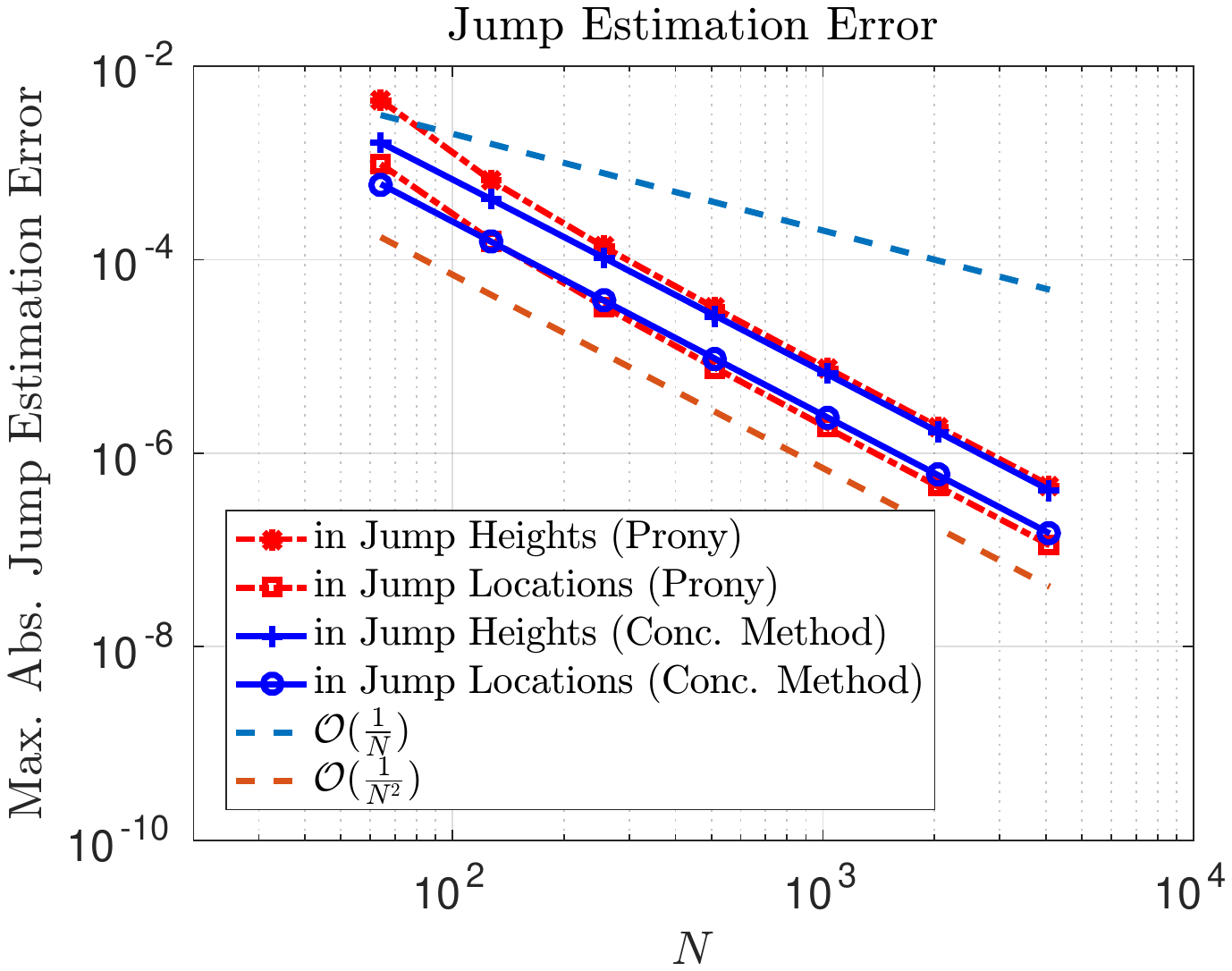} 
        \caption{Error in Approximating Jump Locations and Values}
        \label{fig:jump_conv}
    \end{subfigure}
    \caption{Jump Detection using the Concentration Method}\label{fig:conc_method}
\end{figure}

Next, we present numerical results demonstrating robustness of the concentration edge detection
method to measurement noise. Fig. \ref{fig:conc_method_noise} plots the error (averaged over 
$50$ trials) in estimating jump information from noise
corrupted Fourier coefficients, 
\[ \widehat g_k = \widehat f_k + n_k, \qquad 
            k \in [-N,N] \cap \mathbb Z, \qquad
            n_k \sim \mathcal{CN}(0, \sigma^2), \]
where $\mathcal{CN}(0, \sigma^2)$ denotes complex Gaussian additive noise of zero mean
and variance $\sigma^2$. The variance $\sigma^2$ is chosen such that 
\[ \mbox{SNR (dB)} = 10 \log_{10} \left( \frac{\|{\bf \hat f}\|_2^2}
		{(2N+1) \, \sigma^2}\right), \]
where ${\bf \hat f} = [\hat f_{-N} \dots \hat f_N]^T$ denotes the vector of true Fourier
coefficients. The figure shows that the concentration method is robust to measurement errors; 
Indeed, at high SNRs (such as in Fig. \ref{fig:noise_70}), jump locations are still recovered with
$\mathcal O(1/N^2)$ accuracy. The figure also shows that the concentration method is much more noise
tolerant than the Prony-based method of \S \ref{sec:prony}. Finally, we note that no specific
statistical constructions were used to implement Fig. \ref{fig:conc_method_noise} (such as those in 
\cite{conc_method_statistics}). We leave this to future research where we expect performance
improvements.

\begin{figure}[htbp]
    \centering
    \begin{subfigure}[b]{0.475\textwidth}
        \includegraphics[trim = 1.2in 3.2in 1.5in 2.5in, 
            clip=true, scale = .55]{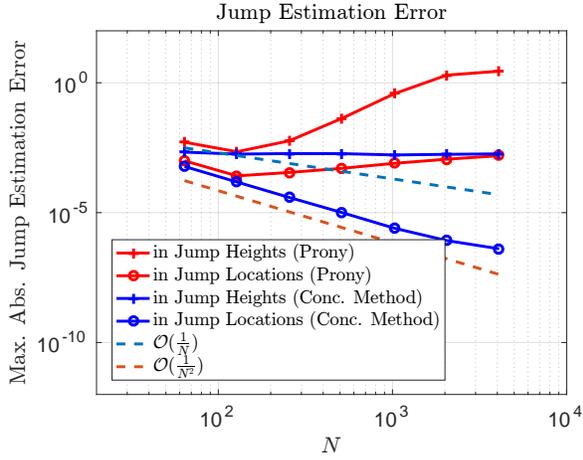} 
        \caption{Error in Estimating Jump Information at $70$dB noise}
        \label{fig:noise_70}
    \end{subfigure}
    \hfill
    \begin{subfigure}[b]{0.475\textwidth}
        \includegraphics[trim = 1.2in 3.2in 1.5in 2.5in, 
            clip=true, scale = .55]{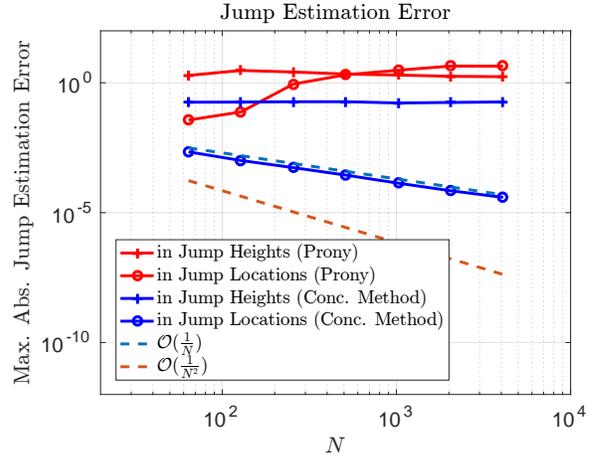} 
        \caption{Error in Estimating Jump Information at $30$dB noise}
        \label{fig:noise_30}
    \end{subfigure}
    \caption{Jump Detection using the Concentration Method in the Presence of Measurement
    Noise}\label{fig:conc_method_noise}
\end{figure}

Figure \ref{Kernel} shows the edge-augmented reconstruction using the concentration jump
estimates compared to the reconstruction using true jump information. We see that for very small
values of $N$ (Fig. \ref{fig:recon_n20}, $N=20$), the jump estimates produced by the concentration method are
inaccurate leading to errors in the edge-augmented reconstruction. However, for larger
values of $N$ (Fig. \ref{fig:recon_n40}, $N=40$), the edge augmented reconstruction using estimated
edge information is comparable to that obtained using true jump information.
(See Figure \ref{jump_error} for absolute error plot.) 

\begin{figure}[htbp]
    \centering
    \begin{subfigure}[b]{0.475\textwidth}
        \includegraphics[trim = 1.2in 3.2in 1.5in 2.5in, 
            clip=true, scale = .55]{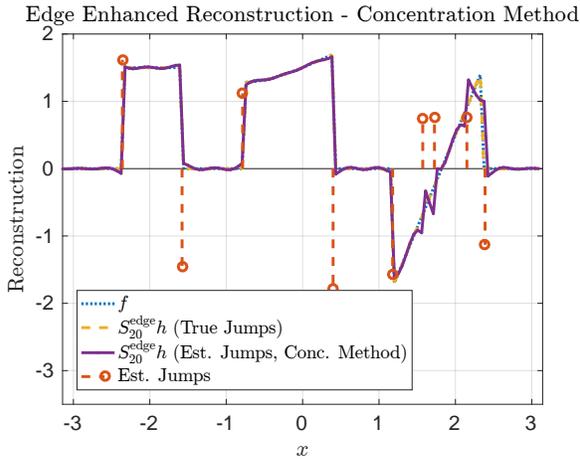} 
        \caption{Reconstruction using $|N|\leq 20$ Fourier Coefficients}
        \label{fig:recon_n20}
    \end{subfigure}
    \hfill
    \begin{subfigure}[b]{0.475\textwidth}
        \includegraphics[trim = 1.2in 3.2in 1.5in 2.5in, 
            clip=true, scale = .55]{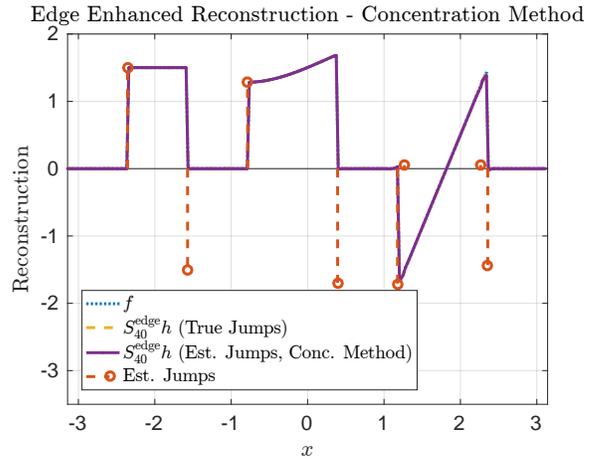} 
        \caption{Reconstruction using $|N|\leq 40$ Fourier Coefficients}
        \label{fig:recon_n40}
    \end{subfigure}
    \caption{Edge-Augmented Reconstruction of Piecewise-Smooth Function (\ref{equ:piecewise}) using the Concentration Method Jump Estimates}\label{Kernel}
\end{figure}

%

\begin{figure}[H]
    \centering
\includegraphics[scale=.8]{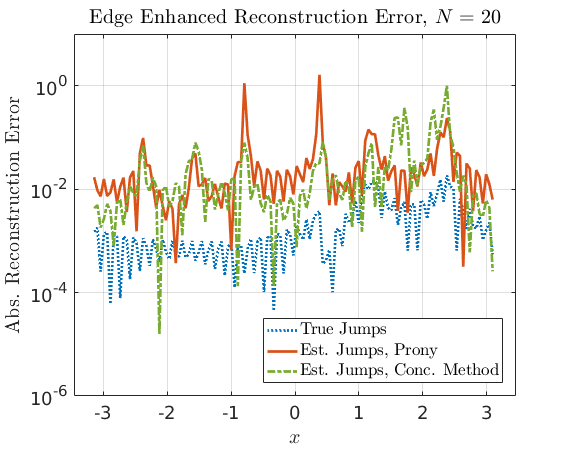}
    \caption{Absolute Error in Edge-Augmented Reconstruction of Piecewise-Smooth Function $h$
        Eqn. (\ref{equ:piecewise})}
    \label{jump_error}
\end{figure}

%
\subsection{Estimate Method Comparison}
These methods each have their own strengths and weaknesses. Prony's Method tends to produce more
accurate estimates using less coefficients, but it is highly dependent on the expected number of
jumps and is more susceptible to measurement errors. The Concentration Kernel method allows a lot more
freedom and possibility for improvement due to our ability to choose sigma, but it does not work as 
well as prony-type methods when using a very low number of Fourier coefficients.  These differences are what make it hard to choose
just a single method for obtaining estimated jump information.  Most likely a hybrid approach will work the best overall.  We suggest the use of Concentration Kernel 
techniques in the noisy regime where the number of jumps is not known well in advance (but where some additional sampling is possible), and the use of 
prony-type methods in the low sample, high SNR setting.

\section{Error Bounds for the Proposed Numerical Method with Estimated Jump Information}
\label{sec:error_bounds_est}
With any method of reconstruction from Section~\ref{Sec:NumMethJumpAugSum} there will be some error involved from the use of estimated jump information (as opposed to true jump information). Our goal is to bound this error and then compare it to the theoretical results in Section~\ref{sec:BasicThErrorBounds}. 

Let $r_j$ denote a ramp function with a jump at $x_j$ \eqref{ramp}, and let $[f](x_j)$ be an associated magnitude (both corresponding to a true jump in a function $f$ we seek to approximate).  Similarly, let $\widetilde{r_j}$ denote another ramp function with a jump at $\widetilde{x_j}$ having an associated magnitude of $a_j$.  We will assume in this section that $\widetilde{r_j}(x)$ is an approximation of $r_j(x)$ produced by one of the methods in Section~\ref{Sec:NumMethJumpAugSum} from the Fourier coefficients of $f$.  These approximate ramp functions are, furthermore, associated with an estimated jump-information based approximation of $f$ given by 
\begin{equation}
\widetilde{f}(x)=\sum_{k = -N}^{N} (\hat{f}_{k}-\widetilde{\hat{f}^{est}_{k}})e^{ikx} +\sum_{j=1}^{J} a_j \widetilde{r_j}(x),
\label{fakeEdge2}
\end{equation}
where $\widetilde{\hat{f}^{est}_{k}}$ denotes the $k^{\rm th}$ Fourier coefficient of $\sum_{j=1}^{J} a_j \widetilde{r_j}$.  In this section we seek to better understand the error $\| f - \widetilde{f} \|_2$.  Toward that end, we present the following result.

\begin{theorem} 
Let $f$ be a piecewise smooth function, $S_N^{edge}f$ be the edge-augmented Fourier sum approximation of $f$ with true jump information $\eqref{Edge 2}$, and $\widetilde{f}$ be the edge-augmented Fourier sum approximation of $f$ with estimated jump information \eqref{fakeEdge2}.  Then, if $|\widetilde{x_j}-x_j|<\epsilon$ and $|a_j-[f](x_j)|<\delta$ both hold for all $j \in \{ 1, \dots, J\}$, we have that
$$||f-\widetilde{f}||_2 \leq \sqrt{\frac{2c^2}{3N^3}} + J \delta + \sqrt{\frac{\epsilon}{2 \pi}} \left(J \delta + \sum^{J}_{j=1} \left| [f](x_j) \right|  \right)$$
where $c \in \mathbbm{R}^+$ is a constant which is independent of $N$, $\epsilon$, and $\delta$.
\label{thm:ErrEstJumpInf}
\end{theorem}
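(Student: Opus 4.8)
The plan is to route the estimate through the idealized reconstruction $S_N^{edge}f$ via the triangle inequality,
$$\|f-\widetilde f\|_2 \;\le\; \|f-S_N^{edge}f\|_2 \;+\; \|S_N^{edge}f-\widetilde f\|_2 ,$$
and bound the two pieces separately. The first piece is immediate from Theorem~\ref{fsnedgeerror}: $\|f-S_N^{edge}f\|_2\le\sqrt{2c^2/(3N^3)}$, which is exactly the first term of the claimed bound. Hence all of the work is in the second piece, and the useful observation is that subtracting \eqref{fakeEdge2} from \eqref{Edge 2} makes the \emph{true} Fourier data $\hat f_k$ (for $|k|\le N$) cancel. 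Writing $g:=\sum_{j=1}^{J}a_j\widetilde{r_j}-\sum_{j=1}^{J}[f](x_j)r_j$ and recalling that $\widetilde{\hat f^{est}_k}$ and $\hat f^{est}_k$ are the $k$th Fourier coefficients of $\sum_j a_j\widetilde{r_j}$ and $\sum_j [f](x_j)r_j$ respectively (both with vanishing $0$th coefficient), one gets $\widetilde{\hat f^{est}_k}-\hat f^{est}_k=\hat g_k$ for every $k$, and therefore $S_N^{edge}f-\widetilde f = S_N g - g$. Since $g-S_N g$ keeps only the $|k|>N$ Fourier modes of $g$, Parseval gives $\|S_N^{edge}f-\widetilde f\|_2=\|g-S_N g\|_2\le\|g\|_2$, so it remains only to bound $\|g\|_2$ in terms of $\epsilon$ and $\delta$.

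To estimate $\|g\|_2$ I would split each summand as $a_j\widetilde{r_j}-[f](x_j)r_j = a_j(\widetilde{r_j}-r_j)+(a_j-[f](x_j))r_j$ and apply the triangle inequality. Two elementary facts about ramp functions then do everything. First, $\|r_j\|_2\le 1$ — indeed, by Parseval $\|r_j\|_2^2=\sum_{k\ne 0}\tfrac{1}{4\pi^2k^2}=\tfrac{1}{12}$, and the same holds for $\widetilde{r_j}$ — so the terms $(a_j-[f](x_j))r_j$ contribute at most $\sum_{j=1}^J|a_j-[f](x_j)|<J\delta$. Second, and this is the heart of the matter, on $[-\pi,\pi]$ one has $r_j(x)=\tfrac{-\pi-x}{2\pi}+\mathbbm{1}[x>x_j]$ and likewise $\widetilde{r_j}(x)=\tfrac{-\pi-x}{2\pi}+\mathbbm{1}[x>\widetilde{x_j}]$, so $\widetilde{r_j}-r_j=\mathbbm{1}[x>\widetilde{x_j}]-\mathbbm{1}[x>x_j]$ equals $\pm 1$ on the interval between $x_j$ and $\widetilde{x_j}$ and vanishes elsewhere; hence $\|\widetilde{r_j}-r_j\|_2^2=\lvert\widetilde{x_j}-x_j\rvert/(2\pi)<\epsilon/(2\pi)$. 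Combining these with $|a_j|\le|[f](x_j)|+\delta$ yields $\|g\|_2\le J\delta+\sqrt{\epsilon/(2\pi)}\bigl(J\delta+\sum_{j=1}^{J}|[f](x_j)|\bigr)$, and adding back the bound on $\|f-S_N^{edge}f\|_2$ gives exactly the asserted inequality.

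The main obstacle is the second ``heart of the matter'' step: one must recognize that the difference of two unit-height ramps with nearby jumps is supported on an interval whose length is precisely the gap between the two jump locations, since this is what turns a location error of size $\epsilon$ into an $L^2$ error of order $\sqrt{\epsilon}$ rather than $\epsilon$. A minor technicality to keep in mind is the case where $x_j$ or $\widetilde{x_j}$ lies near $\pm\pi$, where ``the interval between them'' should be understood on the circle $\mathbbm{R}/2\pi\mathbbm{Z}$; working directly with the closed form of $r_j$ on $[-\pi,\pi]$ avoids any wrap-around bookkeeping. Everything else — the two triangle inequalities, the Parseval identities, and the contraction property $\|g-S_N g\|_2\le\|g\|_2$ — is routine.
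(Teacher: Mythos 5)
Your argument is correct, and its skeleton is the paper's: triangle inequality through $S_N^{edge}f$, Theorem~\ref{fsnedgeerror} for the first term, and Parseval to discard the $|k|\le N$ truncation so that only $\bigl\|\sum_{j=1}^{J}\bigl([f](x_j)r_j-a_j\widetilde{r_j}\bigr)\bigr\|_2$ needs to be bounded. The one place you genuinely diverge is the per-jump estimate. The paper bounds $d_j:=[f](x_j)r_j-a_j\widetilde{r_j}$ by an explicit pointwise case analysis ($|d_j|\le\delta$ off the interval between $x_j$ and $\widetilde{x_j}$, $|d_j|\le\delta+|[f](x_j)|$ on it), which gives $\|d_j\|_2\le\sqrt{\delta^2+\tfrac{\epsilon}{2\pi}\left(\delta+|[f](x_j)|\right)^2}$ and then relaxes the square root; you instead split $a_j\widetilde{r_j}-[f](x_j)r_j=(a_j-[f](x_j))r_j+a_j(\widetilde{r_j}-r_j)$ and use the exact norms $\|r_j\|_2=1/\sqrt{12}$ and $\|\widetilde{r_j}-r_j\|_2^2=|\widetilde{x_j}-x_j|/(2\pi)$ (valid in the paper's normalized $L^2$ norm, consistent with its use of Parseval). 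Both versions rest on the same key observation — the mismatch between the two ramps lives on an interval of length less than $\epsilon$, which is exactly what converts a location error $\epsilon$ into an $L^2$ error of order $\sqrt{\epsilon}$ — and both deliver the stated bound; your splitting is a bit cleaner and even marginally sharper (your $J\delta$ term actually carries a factor $1/\sqrt{12}$), while the paper's pointwise bound avoids introducing the auxiliary decomposition. Your remark about $x_j$ or $\widetilde{x_j}$ near $\pm\pi$ is fair, but note the paper's own formula for $d_j$ makes the same implicit assumption that both jump locations lie in $(-\pi,\pi]$, so you are not losing anything relative to the published argument.
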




\begin{proof} 
We begin with the triangle inequality
\begin{equation*} \|f-\widetilde{f}\|_2 \leq \|f-S_N^{edge}f\|_2+\|S_N^{edge}f-\widetilde{f}\|_2.
\end{equation*} 
Theorem~\ref{fsnedgeerror} can now be applied to bound the first term above.  Doing so we learn that
\begin{equation}\label{tri} \|f-\widetilde{f}\|_2\leq\sqrt{\frac{2c^2}{3N^3}} + \|S_N^{edge}f-\widetilde{f}\|_2.
\end{equation} 
The remainder of the proof is now dedicated to bounding the second term in \eqref{tri}.  

Using \eqref{Edge 1} together with a similar form for $\tilde{f}$ we can see that 
\begin{align*}
\|S_N^{edge}f-\widetilde{f}\|^2_2 &= \left\| \sum_{|k| > N} \left( \hat f_{k}^{est} - \widetilde{\hat{f}^{est}_{k}} \right)e^{ikx} \right\|^2_2 & \\
& = \sum_{|k| > N} \left| \hat f_{k}^{est} - \widetilde{\hat{f}^{est}_{k}} \right|^2 \hspace{.7in} &\textrm{(by Parseval's Theorem)}\\
& \leq \sum^{\infty}_{k = -\infty} \left| \hat f_{k}^{est} - \widetilde{\hat{f}^{est}_{k}} \right|^2 &\\
& = \left\| \sum_{j=1}^{J}[f](x_{j})r_{j} - \sum_{j=1}^{J} a_j \widetilde{r_j} \right\|^2_2  &\textrm{(by Parseval's Theorem)} \\
& = \left\| \sum_{j=1}^{J} \left([f](x_{j})r_{j} - a_j \widetilde{r_j} \right) \right\|^2_2. &
\end{align*}
A second use of the triangle inequality now reveals that
\begin{equation}
\|S_N^{edge}f-\widetilde{f}\|_2 \leq \sum_{j=1}^{J} \left\| [f](x_{j})r_{j} - a_j \widetilde{r_j} \right\|_2.
\label{equ:LastThmBound}
\end{equation}

In order to upper bound the righthand side of \eqref{equ:LastThmBound} we must now consider $$d_j := [f](x_{j})r_{j} - a_j \widetilde{r_j}$$ for each $j \in \{ 1, \dots, J\}$.  Appealing again to the ramp function definition in \eqref{ramp}, one can see that
\begin{align*}
d_j(x) ~=~& \chi_{(-\pi,\min(x_j,\widetilde{x_j}))}(x) \left[ \frac{a_j - [f](x_{j})}{2} \right] +  \widetilde{\chi}_{(x_j,\widetilde{x_j})}(x) \left[ \left( \frac{[f](x_{j})+a_j}{2} \right) {\rm sign}(\widetilde{x_j} - x_j) \right]\\ 
&+ \chi_{(\max(x_j,\widetilde{x_j}),\pi]}(x) \left[ \frac{[f](x_{j})-a_j}{2} \right] + \left( \frac{a_j - [f](x_{j})}{2 \pi} \right) x
\end{align*}
where 
$$\widetilde{\chi}_{(x_j,\widetilde{x_j})}(x) :=\begin{cases} 1, & {\rm if}~\widetilde{x_j}>x_j \;\;{\rm and}\;\; x\in[x_j,\widetilde{x_j}] \\
1, & {\rm if}~x_j > \widetilde{x_j} \;\;{\rm and}\;\; x\in[\widetilde{x_j},x_j] \\
0, & {\rm else}
\end{cases}.$$
Thus, 
 $$|d_j(x)|^2 \leq \begin{cases} \delta^2, & {\rm if}~x < \min(\widetilde{x_j},x_j) \;\;{\rm or}\;\; x > \max(\widetilde{x_j},x_j) \\
\left( \delta + \left| [f](x_{j}) \right| \right)^2, &  {\rm else}
\end{cases}.$$

With these inequalities in hand we can now continue to bound \eqref{equ:LastThmBound} by
\begin{align*}
\|S_N^{edge}f-\widetilde{f}\|_2 ~&\leq~ \sum_{j=1}^{J} \sqrt{ \frac{1}{2 \pi} \int^{\pi}_{-\pi} |d_j(x)|^2 ~dx }\\
& \leq~\sum_{j=1}^{J} \sqrt{ \delta^2 + \frac{\epsilon}{2 \pi} \left( \delta + \left| [f](x_{j}) \right| \right)^2} \\
& \leq~\sum_{j=1}^{J} \left[ \delta + \sqrt{\frac{\epsilon}{2 \pi}} \left( \delta + \left| [f](x_{j}) \right| \right) \right] \\
&=~J \delta + \sqrt{\frac{\epsilon}{2 \pi}} \left( J \delta + \sum_{j=1}^{J} \left| [f](x_{j}) \right| \right).
\end{align*}
Combining this last inequality with \eqref{tri} finishes the proof.
\end{proof}

Comparing Theorem~\ref{thm:ErrEstJumpInf} to Corollary~\ref{cor:PS2normError} we can see that $\widetilde{f}$ from \eqref{fakeEdge2} will approximate piecewise smooth functions with jumps better than $S_Nf$ will as long as $\delta$ is $o(1/\sqrt{N})$ and $\epsilon$ is $o(1/N)$.  In fact, the numerical behavior is much better than this (suggesting, among other things, that Theorem~\ref{thm:ErrEstJumpInf} can probably be improved).

\begin{figure}[hbtp]
\centering
\includegraphics[scale=.8]{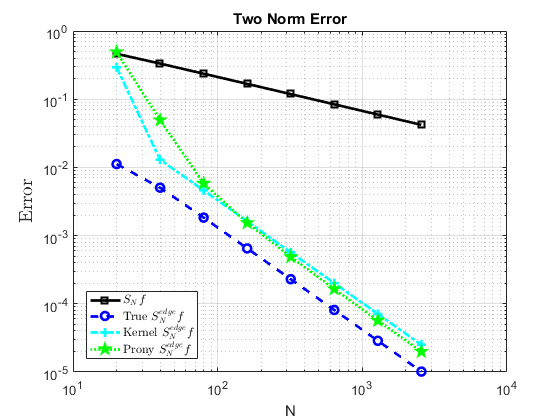}
\caption{Two norm error comparison for all means of reconstruction}
\label{norm_graph}
\end{figure}

Figure \ref{norm_graph} shows the error from Figure \ref{true and standard two norm error} but now also includes the error from using estimated jump information via Prony's Method as well as the trigonometric concentration kernel. Once again, the slope of each line in Figure \ref{norm_graph} is important to note. Since we won't have true jump information in applications, we want to see if our estimated information can still give us a similar degree of accuracy as the error expected using true jump information. At lower values of $N$, the error is closer to that of the standard reconstruction, but as $N$ increases the slopes of the reconstructions from Prony's Method and the concentration kernel start to match that of the slope from true jump information. This means the decay for the estimated jump information is a great improvement over the standard reconstruction (and, indeed, near optimal for larger $N$).

\section{Preliminary Two-Dimensional Results}
\label{sec:2D}
We conclude by presenting preliminary results demonstrating the extension of the proposed method to
the two-dimensional case. 
Consider the reconstruction of a two-dimensional $2\pi$-periodic function $f$ given its 
Fourier series coefficients 
\begin{equation}
    \hat f_{k,\ell}=\frac{1}{4\pi^2}\int_{-\pi}^{\pi} \int_{-\pi}^{\pi} f(x,y) 
e^{-ikx}e^{-i \ell y}dxdy, \qquad (k, \ell) \in [-N, N]^2\cap \mathbb Z^2.
\end{equation}
If $f$ is piecewise-smooth, the 2D Fourier partial sum reconstruction 
\begin{equation}
    S_{N,M} f(x,y)=\sum_{|k| \leq N}\sum_{|\ell | \leq M} \hat f_{k, \ell } e^{ikx}e^{i \ell y}
\end{equation}
\begin{figure}[hbtp]
    \centering
    \includegraphics[scale = 0.5]{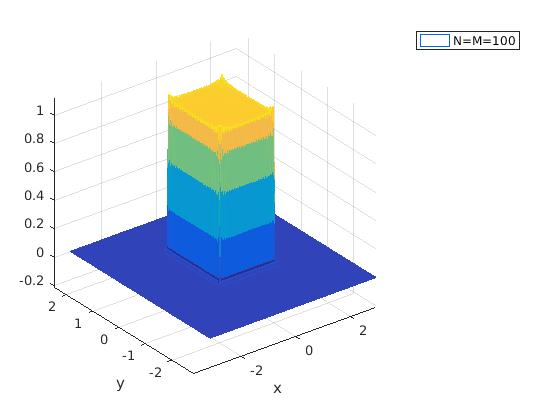}
    \caption{2D Fourier Partial Sum Reconstruction of the Box Function using $100 \times 100$
    coefficients.}
    \label{fig:2dbox1}
\end{figure}
suffers from Gibbs artifacts as in the one dimensional case. For example, Fig. \ref{fig:2dbox1}
plots the Fourier partial sum reconstruction of the ``box'' function 
\begin{equation}
    f_1(x,y) = \left \{ 
        \begin{array}[h]{lc}
            1, & \text{if } -1\leq x,y\leq 1, \\
            0, & \text{else}
        \end{array}
        \right.
    \label{eq:2dbox}
\end{equation}
using $|N|, |M| \leq 100$ Fourier series coefficients. Despite the large number of coefficients used
in the reconstruction, note the significant Gibbs artifacts along the edges of the function.

To mitigate these artifacts, we consider the following two-dimensional extension of the edge-augmented
partial sum procedure of \S \ref{sec:JumpAugPartSumRec}. Herein, we use the notation 
$g_{r_j}(x):= f(x, y_j)$ and $g_{c_j}(y):=f(x_j, y)$ to denote the row and column cross-sections of
$f$ respectively. Without loss of generality, and for ease of notation, we assume that $N=M$.
Furthermore, we assume that reconstruction is performed on a $M\times M$ equispaced grid in image
space.
\begin{codebox}
\Procname{$\proc{Two-Dimensional Edge-Augmented Fourier Sum}$}
\li Given: Fourier coefficients $\widehat f_{k,l}$ for $|k|,|l|\leq N$
\li Define oversampled grid in the $y$-dimension, $y_j = -\pi+j\frac{2\pi}{M_{over}}, \; j =
0,\dots,M_{over}-1$ where $M_{over}\geq M$ \\
\li Reconstruct along the rows of $f$ \Do
\li \For $j \in \{0,1,\dots,M_{over}-1\}$ \Do
\li         Approximate Fourier coefficients of the (row) cross-section \\ 
                \qquad \qquad \qquad \qquad \qquad \qquad  
                $\displaystyle (\widehat g_{r_j})_k \approx \sum_{|l|\leq N} 
                \widehat f_{k,l} e^{ily_j}, \;|k| \leq N$. 
\li         Estimate jump locations and associated jump values 
            $\{(\widetilde x_p^j, a^j_p)\}_{p=1}^{L_x^j}$ 
            along the cross-section \\ \qquad \qquad
            using \S \ref{sec:prony} or \S \ref{sec:conc}. (Here, $L_x^j$ denotes the number of 
                jumps in the cross-section.)
\li         Compute the edge-augmented Fourier partial sum approximation $\widetilde f(x, y_j)$ of 
            \\ \qquad \qquad the (row) cross-section using (\ref{fakeEdge2}).
        \End  \End \\
\li Refine reconstruction along the columns of $f$ \Do
\li \For $j \in \{0,1,\dots,M-1\}$ \Do
\li         Approximate Fourier coefficients of the (column) cross-section using quadrature \\
                \qquad \qquad \qquad \qquad \qquad \qquad  
                $\displaystyle (\widehat g_{c_j})_\ell \approx \frac 1 {M_{over}} \sum_{k=0}^{M_{over}-1} 
                \widetilde f(x_j, y_k) e^{-i\ell y_k}, \;|\ell| \leq N$.
\li         Estimate jump locations and associated jump values 
            $\{(\widetilde y_p^j, b^j_p)\}_{p=1}^{L_y^j}$ 
            along the cross-section \\ \qquad \qquad
            using \S \ref{sec:prony} or \S \ref{sec:conc}. (Here, $L_y^j$ denotes the number of 
                jumps in the cross-section.)
\li         Compute edge-augmented Fourier partial sum approximation $\widetilde f(x_j, y)$ of 
            \\ \qquad \qquad the (column) cross-section using (\ref{fakeEdge2}).
\li         Output the grid values from (column) cross-section estimates $\widetilde f(x_j, y)$ as final grid estimates.
        \End
\end{codebox}

Some representative results can be seen in Fig. \ref{fig:2d} which plots the reconstruction of the function  
\begin{align*}
    f_2(x,y) & = 0.75 \ {\bf 1}_{  \left[-9/4,-1/4\right] \times
                                    \left[-5/2, -1/2\right]} \quad + \\
             & \qquad 0.50 \ {\bf 1}_{\{(x,y) \in \mathbb R^2 \ | \  
                                    (x-1/2)^2+(y-1)^2\leq1\}} \quad + \\
             & \qquad 0.35 \ {\bf 1}_{\{(x,y) \in \mathbb R^2 \ | \  
                                    (x-5/4)^2+(y+5/4)^2\leq 1/4\}}
\end{align*}
(here ${\bf 1}_\mathcal A$ denotes the indicator function of $\mathcal A \subset \mathbb R^2$) using
its two-dimensional (continuous) Fourier series coefficients $(\widehat{f_2})_{k,l}$ for $|k|, |l| \leq
25$. Note that the Fourier coefficients of this function can be evaluated in closed form since
the Fourier coefficients of the box and circle are known in closed form.
We can see that the standard Fourier partial sum reconstruction in Fig. \ref{fig:2d}(a) shows
significant Gibbs oscillatory artifacts, while the proposed method in Fig. \ref{fig:2d}(b) is much
more accurate. This is especially evident in the error plots of the respective
reconstructions -- note the large errors along the edges of the features of $f_2$ in the Fourier
partial sum, while the edge-augmented reconstruction shows significantly reduced errors and
artifacts. Errors are reported in terms of the peak signal to noise ratio (PSNR); let ${\mathbf f}$ and 
$\widetilde {\mathbf f}$ denote a two-dimensional $N\times N$ grid function and its approximation respectively. Then, 
\[ PSNR = 20 \log_{10} \left( \frac{N\; \| \text{vec}({\mathbf f})\|_\infty}{\|{\mathbf f}-\widetilde {\mathbf f}\|_F }\right). \]
As with the one-dimensional case, the concentration kernel edge detection method
is used to estimate edge locations and heights from the given Fourier data. 
\begin{figure}[htbp]
    \centering
    \begin{subfigure}[b]{0.475\textwidth}
        \includegraphics[clip=true, trim=1cm 8.2cm 1cm 8.95cm, width=9cm]{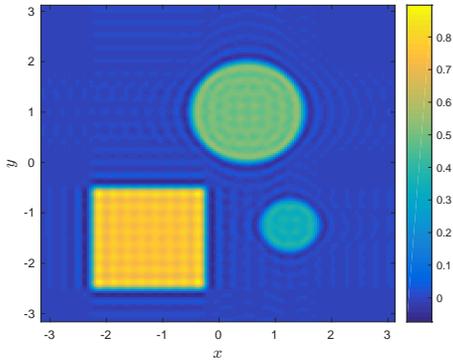} 
        \caption{Standard Fourier Reconstruction}
        \label{fig:2d_four}
    \end{subfigure}
    \hfill
    \begin{subfigure}[b]{0.475\textwidth}
        \includegraphics[clip=true, trim=1cm 8.2cm 1cm 8.95cm, width=9cm]{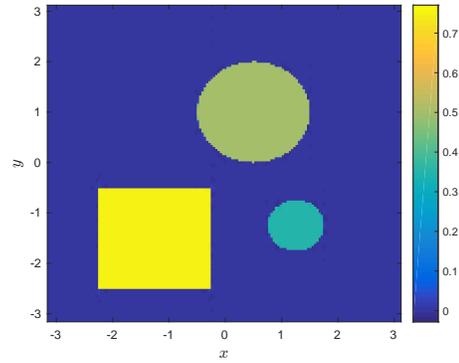} 
        \caption{Proposed Method}
        \label{fig:2d_proposed}
    \end{subfigure} \vspace{0.1in} \\
    \begin{subfigure}[b]{0.475\textwidth}
        \includegraphics[clip=true, trim=1cm 8.2cm 1cm 8.95cm, width=9cm]{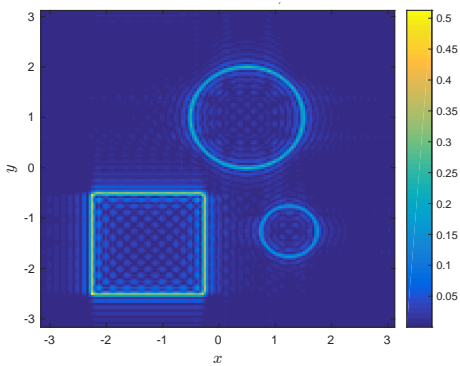} 
        \caption{Error using Fourier Reconstruction, PSNR = $26.97$ dB}
        \label{fig:2d_four_error}
    \end{subfigure}
    \hfill
    \begin{subfigure}[b]{0.475\textwidth}
        \includegraphics[clip=true, trim=1cm 8.2cm 1cm 8.95cm, width=9cm]{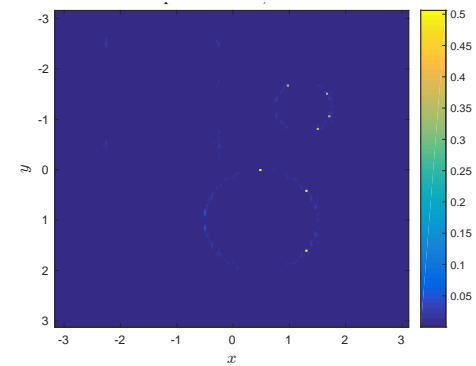} 
        \caption{Error using Proposed Method, PSNR = $42.76$ dB.}
        \label{fig:2d_proposed_error}
    \end{subfigure}
    \caption{2D Edge-Augmented Fourier Reconstruction using Fourier coefficients $(\widehat{f_2})_{k,l}$ for $|k|, |l| \leq 25$ }
    \label{fig:2d}
\end{figure}

\section{Conclusion}
\label{sec:conclusion}
\indent In this paper we discuss reconstruction through partial sum approximations and further, seek
to reduce the Gibbs phenomenon. We use examples with incorporated true jump information and
estimated jump information. We additionally explore two methods (namely, Prony's Method and the
Concentration Kernel Method) to accurately estimate jump heights and locations. We show how these
methods can be applied in the one dimensional case while also providing preliminary two dimensional 
empirical results. Additionally, we derive theoretical error bounds for these methods of reconstruction 
for the one dimensional case.

There are several avenues for future investigation.  
First, the noise produced by the MRI machine itself needs to be accounted for in determining the
practical robustness of our methods.  Second, more advanced Concentration Kernel methods could be
developed according to \cite{ImprovedConcentrationKernels} and evaluated against the current
results.
Similarly, the analytical error bounds and better methods for the two-dimensional case could be developed. See references in, e.g., \cite{AlgebraicReconstruction} for more on existing techniques.





\section{Acknowledgements} 
\indent We would like to thank 
Dr. Tsvetanka Sendova for her insight and guidance throughout this project.  We also extend our gratitude to 
Michigan State University, Lyman Briggs College and, more specifically, Dr. Robert Bell for organizing and hosting the SURIEM (Summer Undergraduate Research Institute in Experimental Mathematics) program this summer.  We would also like to thank our home colleges/universities: Wellesley College, Albion College, Goucher College, and the University of Virginia. Lastly, a great thanks to the National Security Agency and National Science Foundation for funding the Mathematics REU. 

Project sponsored by the National Science Foundation under Grant Number DMS-1559776.

Project sponsored by the National Security Agency under Grant Number H98230-16-1-0031.

\bibliographystyle{plain}
\bibliography{refs}

\begin{thebibliography}{10}

\bibitem{Archibald_GegenbauerMRI_2002}
Rick Archibald and Anne Gelb.
\newblock A method to reduce the {G}ibbs ringing artifact in {MRI} scans while
  keeping tissue boundary integrity.
\newblock {\em IEEE Trans. Med. Imag.}, 21(4):305--319, Apr. 2002.

\bibitem{Batenkov_2015}
Dmitry Batenkov.
\newblock Complete algebraic reconstruction of piecewise-smooth functions from
  {F}ourier data.
\newblock {\em Math. Comp.}, 84:2329--2350, Feb. 2015.

\bibitem{AlgebraicReconstruction}
Dmitry Batenkov.
\newblock Complete algebraic reconstruction of piecewise-smooth functions from
  {F}ourier data.
\newblock {\em Math. Comp.}, 84:2329--2350, 2015.

\bibitem{Candes_Stable_2006}
Emmanuel~J Candes, Justin~K Romberg, and Terence Tao.
\newblock Stable signal recovery from incomplete and inaccurate measurements.
\newblock {\em Comm. Pure Appl. Math.}, 59(8):1207--1223, Mar. 2006.

\bibitem{Donoho_Compressed_2006}
David~L Donoho.
\newblock Compressed sensing.
\newblock {\em IEEE Trans. Inf. Theory}, 52(4):1289--1306, Apr 2006.

\bibitem{conc_method_theorem_2008}
Shlomo Engelberg and Eitan Tadmor.
\newblock Recovery of edges from spectral data with noise-- a new perspective.
\newblock {\em SIAM J. Numerical Anal.}, 46(5):2620--2635, 2008.

\bibitem{Folland_1992}
Gerald Folland.
\newblock {\em Fourier Analysis and its Applications}, volume~4.
\newblock American Mathematical Society, 1992.

\bibitem{conc_method_Gelb_1999}
Anne Gelb and Eitan Tadmor.
\newblock Detection of edges in spectral data.
\newblock {\em Appl. Comput. Harmonic Anal.}, 7(1):101--135, 1999.

\bibitem{conc_method_Gelb_2000}
Anne Gelb and Eitan Tadmor.
\newblock Detection of edges in spectral data {II}. {N}onlinear enhancement.
\newblock {\em SIAM J. Numerical Anal.}, 38(4):1389--1408, 2000.

\bibitem{haacke1989constrained}
E~Mark Haacke, Zhi-Pei Liang, and Steven~H Izen.
\newblock Constrained reconstruction: {A} superresolution, optimal
  signal-to-noise alternative to the {F}ourier transform in magnetic resonance
  imaging.
\newblock {\em Med. Phys.}, 16(3):388--397, 1989.

\bibitem{haacke1989superresolution}
E~Mark Haacke, Zhi-Pei Liang, and Steven~H Izen.
\newblock Superresolution reconstruction through object modeling and parameter
  estimation.
\newblock {\em IEEE Trans. Acoust., Speech, Signal Process.}, 37(4):592--595,
  Apr. 1989.

\bibitem{Hesthaven_2007_Spectral}
Jan~S. Hesthaven, Sigal Gottlieb, and David Gottlieb.
\newblock {\em Spectral Methods for Time-Dependent Problems}, volume~21 of {\em
  Cambridge Monographs on Applied and Computational Mathematics}.
\newblock Cambridge University Press, 2007.

\bibitem{Bitbucket}
Jade Larriva-Latt et~al.
\newblock {FourierRecon}: {M}atlab software for reconstruction from {F}ourier
  spectral data.
\newblock {\em \url{https://www.bitbucket.org/charms/fourierrecon}}, Sept.
  2016.

\bibitem{Lustig_Sparse_2007}
Michael Lustig, David Donoho, and John~M Pauly.
\newblock Sparse {MRI}: The application of compressed sensing for rapid {MR}
  imaging.
\newblock {\em Magn. Reson. Med.}, 58(6):1182--1195, Dec. 2007.

\bibitem{Lustig_Compressed_2008}
Michael Lustig, David~L Donoho, Juan~M Santos, and John~M Pauly.
\newblock Compressed sensing {MRI}.
\newblock {\em IEEE Signal Process. Mag.}, 25(2):72--82, Mar. 2008.

\bibitem{maravic2005sampling}
Irena Maravic and Martin Vetterli.
\newblock Sampling and reconstruction of signals with finite rate of innovation
  in the presence of noise.
\newblock {\em IEEE Trans. Signal Process.}, 53(8):2788--2805, 2005.

\bibitem{Nishimura_PrinciplesMRI_2010}
Dwight~G. Nishimura.
\newblock {\em Principles of Magnetic Resonance Imaging}, chapter Imaging
  Principles, pages 67--106.
\newblock Lulu, Raleigh, NC, 1.2 edition, 2010.

\bibitem{ongie2015recovery}
Greg Ongie and Mathews Jacob.
\newblock Recovery of piecewise smooth images from few fourier samples.
\newblock In {\em Int. Conf. Sampling Theory and Applications (SampTA)}, pages
  543--547. IEEE, 2015.

\bibitem{ongie2015super}
Greg Ongie and Mathews Jacob.
\newblock Super-resolution {MRI} using finite rate of innovation curves.
\newblock In {\em 12th Int. Symp. Biomedical Imaging (ISBI)}, pages 1248--1251.
  IEEE, 2015.

\bibitem{conc_method_statistics}
Alex Petersen, Anne Gelb, and Randall Eubank.
\newblock Hypothesis testing for {F}ourier based edge detection methods.
\newblock {\em J. Sci. Comput.}, 51(3):608--630, 2012.

\bibitem{Prony}
Satnam Singh.
\newblock Application of {P}rony analysis to characterize pulsed corona reactor
  measurements.
\newblock Master's thesis, University of Wyoming, 2003.

\bibitem{vetterli2002sampling}
Martin Vetterli, Pina Marziliano, and Thierry Blu.
\newblock Sampling signals with finite rate of innovation.
\newblock {\em IEEE Trans. Signal Process.}, 50(6):1417--1428, 2002.

\bibitem{ImprovedConcentrationKernels}
Adityavikram Viswanathan, Anne Gelb, and Douglas Cochran.
\newblock Iterative design of concentration factors for jump detection.
\newblock {\em J. Sci. Comput.}, 51(3):631--649, June 2012.

\end{thebibliography}

\end{document}